\definecolor{refkey}{gray}{.75}
\definecolor{labelkey}{gray}{.75}
\newcommand{\C}{\mathbb C}
\newcommand{\Z}{\mathbb Z}
\newcommand{\N}{\mathbb N}
\newcommand{\pr}{\mathbb P}
\newcommand{\E}{\mathbb E}
\newcommand{\diff}{\rm d}
\newcommand{\var}{\mathrm{var}}
\newcommand{\eps}{\varepsilon}
\newcommand{\cL}{\mathcal{L}}
\newcommand{\ident}{{\mathchoice {\rm 1\mskip-4mu l} {\rm 1\mskip-4mu l}
{\rm 1\mskip-4.5mu l} {\rm 1\mskip-5mu l}}}
\renewcommand{\thesubfigure}{\arabic{subfigure}}
\renewcommand{\@thesubfigure}{\tiny Figure \thesubfigure: \space}
\renewcommand{\p@subfigure}{}
\newtheorem{teo}{Theorem}[section]
\newtheorem{lem}[teo]{Lemma}
\newtheorem{cor}[teo]{Corollary}
\newtheorem{rem}[teo]{Remark}
\newtheorem{pro}[teo]{Proposition}
\newtheorem{defn}[teo]{Definition}
\newtheorem{exmp}[teo]{Example}
\newtheorem{assump}[teo]{Assumption}
\newtheorem{step}{Step}
\begin{document}

\title[Approximating discrete-time BRWs]
{Survival, extinction and approximation of discrete-time branching random walks}

\author[F.~Zucca]{Fabio Zucca}
\address{F.~Zucca, Dipartimento di Matematica,
Politecnico di Milano,
Piazza Leonardo da Vinci 32, 20133 Milano, Italy.}
\email{fabio.zucca\@@polimi.it}

\begin{abstract}
We consider a general discrete-time branching random walk on a countable
set $X$. We relate local, strong local and global survival with suitable inequalities
involving the first-moment matrix $M$ of the process. In particular we prove
that, while the local behavior is characterized by $M$,
the global behavior cannot be completely described in terms of properties involving
$M$ alone. Moreover we show that locally surviving branching random walks can be approximated by
sequences of spatially confined and stochastically dominated branching random walks which eventually survive locally if the
(possibly finite) state space is large enough. An analogous result can be achieved
by approximating a branching random walk by a sequence of multitype contact processes and allowing
a sufficiently large number of particles per site. We compare these results with the
ones obtained in the continuous-time case and we give some examples and counterexamples.
\end{abstract}

\date{}
\maketitle
\noindent {\bf Keywords}: branching random walk, branching process, percolation, multitype contact process.

\noindent {\bf AMS subject classification}: 60J05, 60J80.

\baselineskip .6 cm

\section{Introduction}
\label{sec:intro}
\setcounter{equation}{0}

The theory of branching random walks (BRWs from now on) has a long history dating back
to the earlier works on discrete-time branching processes (see \cite{cf:GW1875} for the original work of Galton
and Watson and \cite{cf:AthNey, cf:Harris63}). 
In the last 20 years much effort has been put in the study of continuous-time BRWs
(see \cite{ cf:HuLalley, cf:Ligg1, cf:Ligg2, cf:MadrasSchi, cf:MountSchi} just to name a few).
Among all the topics which have been studied there is the distinction between local
and global survival (see for instance \cite{cf:PemStac1, cf:Stacey03, cf:BZ, cf:BZ2})
and the relation between multitype contact processes and BRWs (see \cite{cf:BZ3}); besides,
some papers have 
explored the subject of continuous-time BRWs
on random environments (see for instance \cite{cf:GDH92}).

Discrete-time BRWs have been studied initially as a natural generalization of branching
processes (see \cite{cf:AthNey, cf:Big1977, cf:Big1978, cf:BigKypr97, cf:BigRah05, cf:Harris63}
just to mention a few).
In recent years there has been a growing interest on discrete-time BRWs on deterministic
graphs and on random environments (see \cite{cf:CMP98, cf:GMPV09, cf:DHMP99, cf:MP00, cf:MP03, cf:M08}).
It is well-known that any continuous-time BRW admits a discrete-time counterpart with the same behavior (see
Section~\ref{subsec:continuous}), thus, under a certain point of view, discrete-time BRWs
generalize continuous-time BRWs; this is the analogous of the construction of the jump chain
associated to a continuous-time Markov chain.
Hence discrete-time BRWs help to understand continuous-time processes as well
(see for instance \cite{cf:BZ, cf:BZ2}).

There are many reasons for studying fairly simple and non-interacting models such as BRWs.
One of them is the well-known connection with percolation theory.
Besides, they are fundamental tools in the journey to understanding
more sophisticated models: for instance, they are frequently used as comparison to prove survival or
extinction of different particle systems using the well-known technique called \textit{coupling}.
This last reason justifies our choice of studying \textit{truncated BRWs} (see
Sections~\ref{subsec:1} and \ref{sec:truncated}).

One of the differences between the discrete-time and the continuous-time settings
is that in the second one the Markov property identifies a unique family of distributions
for the time intervals between births and deaths and thus a unique family of laws
for the random number of children of each particle.
Hence, even if a continuous-time process seems to be a more realistic picture in view
of applications, on the other hand it appears as a strong restriction if compared to
the wide choice of reproduction laws that one can consider in the discrete-time
case. Moreover, it is customary in the continuous-time setting, to study
a one-parameter family of BRWs simultaneously in order to compute the intervals
corresponding to different behaviors (see Section~\ref{subsec:continuous}
for details). In contrast, the main results for discrete-time BRWs deal with one
single process; nevertheless this is not a serious restriction and it is easy
to see that the results on survival which are known for continuous-time BRWs can
be obtained by applying the results for discrete-time BRWs to their discrete-time counterparts.
In this sense the results of this paper ``generalize'' those of \cite{cf:BZ, cf:BZ2};
we give more details in the outline of the paper below.
We study the survival of a BRW in the first part of this paper.
As for the topic of the second part, namely the approximation
of a BRW, we see that, while the results on the spatial approximation (see Section~\ref{sec:spatial})
generalize those of \cite[Section 3]{cf:BZ3}, the results about the approximation by
truncated BRWs (see Section~\ref{sec:truncated}) cannot be seen as a generalization
of the analogous results of \cite[Section 5]{cf:BZ3}. This is due to the fact
that the discrete-time counterpart of a continuous-time truncated BRW is not
a discrete-time truncated BRW. Indeed note that in the definition of a truncated BRW
(see Section~\ref{subsec:1} and \cite[Section 2]{cf:BZ3}) the time scale is essential.

The aim of this paper is threefold: we want to study the global, local and strong local behavior of discrete-time BRWs,
the possibility of approximating BRW with a sequence of ``spatially confined'' and stochastically dominated BRWs and, finally,
the approximation of a BRW by means of a sequence of truncated BRWs
which are, essentially, \textit{multitype contact processes}.
The results of this paper generalize
those of \cite{cf:BZ, cf:BZ2, cf:BZ3} not only because the class of discrete-time
BRWs extends the class of continuous-time BRWs but also since some of the theorems
are stronger and require weaker hypotheses.

Here is the outline of the paper.
In Section~\ref{sec:discrete-continuous} we define discrete-time BRWs and discuss their main
properties. This is a natural generalization of the class of \textit{multitype Galton--Watson
branching process} (see \cite[Section II.2]{cf:Harris63}) similar to those introduced in other papers
(see \cite{cf:BZ, cf:BZ2, cf:GMPV09, cf:M08} for some recent references).
In Section~\ref{subsec:continuous} we briefly introduce continuous-time BRWs and
we construct their discrete-time counterparts. In Section~\ref{sec:technical} we give the technical
definitions and we state some basic results. Section~\ref{subsec:localglobal} is devoted to the
study of local and global survival. The main result (Theorem~\ref{th:equiv1}) characterizes
local survival by means of the first-moment matrix $M$ of the process (see Section~\ref{subsec:discrete}),
and global survival
using a possibly infinite-dimensional generating function associated to the BRW.
This theorem generalizes \cite[Theorems 4.1, 4.3 and 4.7]{cf:BZ2}. An independent
proof of Theorem~\ref{th:equiv1}(1) appeared in \cite[Theorem 2.4]{cf:M08}.
A similar, but weaker, result for a limited class of continuous-time BRWs
can be found in \cite{cf:PemStac1} and, for the whole class of continuous-time
BRWs, in \cite{cf:BZ, cf:BZ2}.
 We show
that, in general, global survival cannot be characterized in terms of the first-moment
matrix alone (see Example~\ref{exm:noext}), nevertheless some functional inequalities involving
only the first-moment matrix $M$ must hold in case of global survival (see Theorem~\ref{th:equiv1}).
We introduce a class of fairly regular BRWs, which includes BRWs on quasi-transitive graphs
and BRWs on regular graphs, for which we can give a complete characterization of global survival
in terms of the matrix $M$. Roughly speaking, the regularity that we require is the possibility
of mapping these BRWs into multitype Galton--Watson processes for which a complete
characterization of the global survival is known (see for instance \cite{cf:Harris63} and
Theorem~\ref{th:equiv1}(5)).
In Section~\ref{subsec:stronglocal} we show that local survival
can be described, as the global one, by means of an infinite-dimensional generating function;
this is the key to discuss strong local survival.
This kind of survival was already studied, for instance, in \cite{cf:GMPV09} for a BRW
on a random environment on $\Z$. Here,
using a different technique, we are able to treat a large class of transitive BRWs on deterministic
graphs.
In Section~\ref{sec:spatial} we first generalize a Theorem due to Sarymshakov and Seneta
(see \cite[Theorem 6.8]{cf:Sen}) and then we use this result (Theorem~\ref{th:genseneta}) to obtain an approximation
of a general BRW, which is not necessarily irreducible,
by means of a sequence of spatially confined BRWs (Theorem~\ref{th:spatial});
this last one is a generalization of \cite[Theorem 3.1]{cf:BZ3}.
Here we obtain, as a particular case, that
if we have a surviving process, then by confining it
to a sufficiently large (possibly finite and not necessarily connected) proper subgraph
the resulting BRW survives as well;
this result was already known for irreducible BRWs confined to connected subgraphs.
At the end of the section we give some examples and counterexamples.
Section~\ref{sec:truncated} deals with the approximation of the BRW with a sequence
of truncated BRWs. 
The key to obtain such a result
is the comparison of our process with a suitable oriented percolation (as explained in Section~\ref{sec:roadmap}).
The strategy is then applied to some classes of regular BRWs in Theorem~\ref{th:main} (concerning
local behavior) and Theorem~\ref{th:zdrift} (concerning global behavior). Finally in Section~\ref{sec:open}
we briefly discuss some open questions and possible future developments.

\section{The dynamics: discrete and continuous time}
\label{sec:discrete-continuous}


\subsection{Discrete-time branching random walks}
\label{subsec:discrete}

We start with the construction of a generic discrete-time BRW (see also \cite{cf:BZ2} where it is
called \textit{infinite-type branching process}) on a set $X$ which is
at most countable. To this aim we
consider a general family $\mu=\{\mu_x\}_{x \in X}$
of probability measures on 
$S_X:=\{f:X \to \N:\sum_yf(y)<\infty\}$.
The updating rule is
the following: a particle at a site $x\in X$ lives one unit of time,
then, with probability $\mu_x$, a function $f \in S_X$ is chosen
and the original particle is replaced by $f(y)$ particles at
$y$, for all $y \in X$; this is done independently for all the particles.

Here is another equivalent dynamics:
define the function $H:S_X \rightarrow \N$ as $H(f):=\sum_{x \in X} f(x)$
and denote by $\rho_x$ the measure on $\N$ defined by
$\rho_x(\cdot):=\mu_x(H^{-1}(\cdot))$; this is the law of the random number of children
of every particle living at $x$. For each particle, independently, we pick a number $n$ at random,
according to the law $\rho_x$, and then we choose a function $f \in H^{-1}(n)$ with probability
$\mu_x(f)/\rho_x(n)\equiv\mu_x(f)/\sum_{g \in H^{-1}(n)}\mu_x(g)$ and, again,
we replace the particle at $x$ with $f(y)$ particles at $y$ (for all $y \in X$).

More precisely, given a family $\{ f_{i,n,x}\}_{i,n \in \N, x \in X}$
of independent $S_X$-valued random variable such that, for every $x \in X$, $\{f_{i,n,x}\}_{i,n \in \N}$ have the common law $\mu_x$, then
the discrete-time BRW $\{\eta_n\}_{n \in \N}$ is defined
iteratively as follow
\begin{equation}\label{eq:evolBRW}
\eta_{n+1}(x)=\sum_{y \in X} \sum_{i=1}^{\eta_n(y)} f_{i,n,y}(x)=
\sum_{y \in X} \sum_{j=0}^\infty \ident_{\{\eta_n(y)=j\}} \sum_{i=1}^{j} f_{i,n,y}(x)
\end{equation}
starting from an initial condition $\eta_0$. We denote the BRW by $(X,\mu)$; the
initial value will be clearly indicated each time.


Denote by
$m_{xy}:=\sum_{f\in S_X} f(y)\mu_x(f)$
 the expected number of particles from $x$ to $y$
(that is, the expected number of children that a particle living
at $x$ can send to $y$)
 and
suppose that $\sup_{x \in X} \sum_{y \in X} m_{xy}<+\infty$; most of the results
of this paper still hold without this hypothesis, nevertheless
it allows us to avoid dealing with an infinite expected number of offsprings.
Note that $\sum_{y \in X} m_{xy} = \sum_{n \ge 0} n \rho_x(n)=:\bar \rho_x$.

We denote by
 $M=(m_{xy})_{x,y \in X}$ the \textit{first-moment} matrix and by $m^{(n)}_{xy}$
the entries of the matrix $M^n$.
We call \textit{diffusion matrix} the matrix $P$ with entries $p(x,y)=m_{xy}/\bar \rho_x$.

From equation~\eqref{eq:evolBRW},
it is straightforward to prove that the expected number of particles,
starting from an initial condition $\eta_0$,
satisfies the recurrence equation $\E^{\eta_0}(\eta_{n+1}(x))=(\E^{\eta_0}(\eta_{n})M)(x)=\sum_{y \in X}m_{yx} \E^{\eta_0}(\eta_n(y))$
hence
\begin{equation}\label{eq:evolexpectednumber}
\E^{\eta_0}(\eta_n(x)) = \sum_{y \in X} m^{(n)}_{yx} \eta_0(y).
\end{equation}

Moreover, the family of probability measures, $\{\mu_x\}_x$ induces in a natural way a graph structure on $X$ that
we denote by $(X,E_\mu)$ where $E_\mu:=\{(x,y):m_{xy}>0\}\equiv\{(x,y): \exists f \in S_X, \mu_x(f)>0, f(y)>0\}$.
Roughly speaking, $(x,y)$ is and edge if and only if a particle living at $x$ can
send a child at $y$ with positive probability (from now on \textit{wpp}).
We say that there is a path from $x$ to $y$, and we write $x \to y$, if it is
possible to find a finite sequence $\{x_i\}_{i=0}^n$ such that $x_0=x$, $x_n=y$ and $(x_i,x_{i+1}) \in E_\mu$
for all $i=0, \ldots, n-1$. If $x \to y$ and $y \to x$ we write $x \rightleftharpoons y$.

Recall that the matrix $M=(m_{xy})_{x,y \in X}$ is said to be \textit{irreducible} if and only if
the graph $(X,E_\mu)$ is \textit{connected}, otherwise we call it \textit{reducible}. We denote by $\mathrm{deg}(x)$
the degree of a vertex $x$, that
is, the cardinality of the set $\{y\in X: (x,y) \in E_\mu
\}$.

The colony can survive in different ways: we say that the
colony survives locally wpp at $y \in X$ starting from $x \in X$
if
\[
\pr^{\delta_x}(\limsup_{n \to \infty} \eta_n(y)>0)>0;
\]
we say that it survives globally wpp starting from $x$ if
\[
\pr^{\delta_x} \Big (\sum_{w \in X} \eta_n(w)>0, \forall n \in N \Big )>0.
\]
Moreover, following \cite{cf:GMPV09}, we say that the
there is strong local survival wpp at $y \in X$ starting from $x \in X$
if
\[
\pr^{\delta_x}(\limsup_{n \to \infty} \eta_n(y)>0)=
\pr^{\delta_x} \Big (\sum_{w \in X} \eta_n(w)>0, \forall n \in N \Big )>0.
\]
From now on when we talk about survival, ``wpp'' will be tacitly understood.
Often we will say simply that local survival occurs ``starting from $x$'' or ``at $x$'':
in this case we mean that $x=y$.

Clearly local survival implies global survival and,
if $x \to y$ 
then local survival at $x$ implies local survival
at $y$ starting from $x$. Analogously,
if $x \to y$ then
global survival starting from $y$ implies global survival starting from $x$.
Moreover if $x \rightleftharpoons y$ then local (resp.~global) survival
starting from $x$ is equivalent to local (resp.~global) survival starting from $y$.
In particular, if $M$ is irreducible then the process survives locally (resp.~globally) at one vertex if and
only if it survives locally (resp.~globally) at every vertex.

\begin{assump}\label{assump:1}
We assume henceforth that for all $x \in X$ there is a vertex $y \rightleftharpoons x$ such that
$\mu_y(f: \sum_{w \rightleftharpoons y} f(w)=1)<1$,
 that is, in every equivalence class (with respect to $\rightleftharpoons$)
there is at least one vertex where a particle
can have a number of children different from one wpp.
\end{assump}

\begin{rem}\label{rem:assump1}
The previous assumption guarantees that the restriction of the BRW
to an equivalence class is \textit{nonsingular} (see \cite[Definition II.6.2]{cf:Harris63}).
%
There is a technical reason behind the previous assumption: if we consider
the classical Galton--Watson branching process, that is, $X:=\{x\}$ is a singleton
and $\mu_x$ can be considered as a probability measure on $\mathbb N$, then it is well-known that
\begin{itemize}
 \item if $\mu_x(1)=1$ then $m_{xx}=1$ and there is survival with probability $1$;

\item if $\mu_x(1)<1$ then there is survival wpp if and
only if $m_{xx}>1$.
\end{itemize}
Hence the condition  $m_{xx}>1$ is equivalent to survival under
Assumption~\ref{assump:1}. This will be used implicitly in Theorem~\ref{th:equiv1}(1) and (5)
(but it is not needed, for instance, in Theorem~\ref{th:equiv1}(2), (3) and (4)).
\end{rem}


\medskip

A particular, but meaningful, subclass of discrete-time BRWs is described by the following updating rule:
a particle at site $x$ lives one unit of time
and is replaced by
a random number of children, with law $\rho_x$.
The children are dispersed independently on the
sites of the graph, according to a stochastic matrix $P$.
Note that this rule is a particular case of the
general one, since here one simply chooses 
\begin{equation}\label{eq:particular1}
\mu_x(f)=\rho_x \left (\sum_y f(y) \right )\frac{\sum_y f(y)!}{\prod_y f(y)!} \prod_y (p(x,y))^{f(y)},
\quad \forall f \in S_X.
\end{equation}
\smallskip
Clearly in this case
 the expected number of children at $y$ of a particle
living at $x$ is
\begin{equation}\label{eq:meanparticular}
 m_{xy}=p(x,y) \bar \rho_x.
\end{equation}

\subsection{Continuous-time branching random walks}
\label{subsec:continuous}

Continuous-time BRWs have been studied extensively by many authors;
in this section we make use of a natural correspondence between continuous-time BRWs and
discrete-time BRWs which preserves both local and global behaviors.

In continuous time each particle has an exponentially distributed
random lifetime with parameter 1. The breeding mechanisms can
be regulated by putting on each edge $(x,y)$ and for each particle at $x$,
a clock with $Exp(\lambda k_{xy})$-distributed intervals (where $\lambda>0$),
each time the clock
rings the particle breeds in $y$.
Equivalently one can associate to each particle at $x$
a clock with $Exp(\lambda k(x))$-distributed intervals ($k(x)=\sum_y k_{xy}$):
each time the clock rings the particle breeds and the offspring is placed
at random according to a stochastic matrix $P$
(where $p(x,y)=k_{xy}/k(x)$).

The formal construction of a BRW in continuous time is based on
the action of a semigroup with infinitesimal generator
\begin{equation}\label{gengen}
\cL f (\eta):=  \sum_{x \in X} \eta(x)\Big (\partial_x^- f(\eta) + \lambda \sum_{y\in X} \,
k_{xy}\,\partial_y^+ f(\eta) \Big ),
\end{equation}
where $\partial_x^\pm f(\eta):=f(\eta\pm\delta_x)-f(\eta)$.


Every continuous-time BRW
has a discrete-time counterpart and they both survive or both die
(locally or globally);
here is the construction.
The initial particles represent the generation $0$ of the discrete-time BRW;
the generation $n+1$ (for all $n \ge 0$) is obtained by
considering the children of all the particles of generation $n$
(along with their positions).
Clearly the progenies of the original continuous-time BRW and of its discrete-time counterpart
are both finite (or both infinite) at the same time.
In this sense the theory of continuous-time BRWs, as long as we
are interested in the probability of survival (local, strong local and global),
is a particular case of the theory of discrete-time BRWs.

Elementary calculations show that each particle living at $x$, before dying,
has a random number of offsprings given by equation~\eqref{eq:particular1} where
\begin{equation}\label{eq:counterpart}
\rho_x(i)=\frac{1}{1+\lambda k(x)} \left ( \frac{\lambda k(x)}{1+\lambda k(x)} \right )^i, \qquad
p(x,y)=\frac{k_{xy}}{k(x)},
\end{equation}
and this is the law of the discrete-time counterpart. Using equation~\eqref{eq:meanparticular},
it is straightforward to show that $m_{xy}=\lambda k_{xy}$.
From equation~\eqref{eq:counterpart} we have that, for any $\lambda>0$, the discrete-time
counterpart satisfies Assuption~\ref{assump:1}.


\smallskip

Given $x_0 \in X$, two critical parameters are associated to the
continuous-time BRW: the global 
survival
critical parameter $\lambda_w(x_0)$ and the local 
survival one $\lambda_s(x_0)$.
They are defined as
\begin{equation}\label{eq:criticalparameters}
\begin{split}
\lambda_w(x_0)&:=\inf\{\lambda>0:\,\pr^{\delta_{x_0}}\left(\exists t:\eta_t=\mathbf{0}\right)<1\}\\
\lambda_s(x_0)&:=\inf\{\lambda>0:\,\pr^{\delta_{x_0}}\left(\exists \bar t:\eta_t(x_0)=0,\,\forall t\ge\bar t\right)<1\},
  \end{split}
\end{equation}
where $\mathbf{0}$ is the configuration with no particles at all
sites and $\pr^{\delta_{x_0}}$ is the law of the process which
starts with one individual in $x_0$. If the graph $(X, E_{\mu})$
is connected then these values do not depend on the initial
configuration, provided that this configuration is finite (that
is, it has only a finite number of individuals), nor on the choice
of $x_0$. See \cite{cf:BZ} and \cite{cf:BZ2} for a more detailed
discussion on the values of $\lambda_w(x_0)$ and $\lambda_s(x_0)$.


\section{Technical definitions}\label{sec:technical}

In this section we give some technical definitions
and we state some basic facts which are widely used in the
rest of the paper.

\subsection{Reproduction trails}\label{subsec:trails}

A fundamental tool which is useful throughout the whole paper is
the reproduction trail;
this allows us to give an alternative construction of the BRW.
We fix an injective map $\phi:X\times X \times \Z \times \N \rightarrow \N$.
Let the family $\{ f_{i,n,x}\}_{i,n \in \N, x \in X}$ be as in Section~\ref{subsec:discrete} and let
$\eta_0$ be the initial value. For any fixed realization of the process we call \textit{reproduction trail}
to $(x,n) \in X \times \N$ a sequence
\begin{equation}\label{eq:trail}
(x_0,i_0,1),(x_1,i_1,j_1), \ldots ,(x_{n},i_{n}, j_{n})
\end{equation}
such that $-\eta_0(x_0) \le i_0 <0$, $0 < j_l \le f_{i_{l-1}, l-1, x_{l-1}}(x_l)$
and $\phi(x_{l-1}, x_l, i_{l-1}, j_l)=i_l$, where $0 <l \le n$.
The interpretation  is the following:
$i_n$ is the identification number of the particle, which
lives at $x_n$ at time $n$ and is the $j_n$-th offspring
of its parent. The sequence $\{x_0, x_1, \ldots, x_n\}$ is
the path induced by the trail (sometimes, we say
that the trail is based on this path).
Given any element $(x_l, i_l, j_l)$ of the trail~\eqref{eq:trail}, we say that the particle identified
by $i_n$ is a descendant of generation $n-l$ of the particle identified by $i_l$ and the trail
joining them is $(x_l,i_l,j_l), \ldots ,(x_{n},i_{n}, j_{n})$. We say also that the trail
of the particle $i_n$ is a prolongation of the trail of the particle $i_l$.

Roughly speaking the trail represents the past history
of each single particle back to its original ancestor, that is, the one living at time $0$; we note that from the couple
$(n, i_n)$, since the map $\phi$ is injective, we can trace back the entire genealogy of the particle.
The random variable $\eta_n(x)$ can be alternatively defined as the number of reproduction trails to
$(x,n)$.
This construction does not coincide with the one
induced by the equation~\eqref{eq:evolBRW} but the resulting processes have the same laws.

Finally, when $\mu_x$ does not depend on $x \in X$, it is worth mentioning another possible construction of the process as a
Markov chain indexed by trees (see for instance \cite[Section 5.C]{cf:Woess09}).

\subsection{Generating functions}\label{subsec:genfun}

Later on we will need some generating functions, both
1-dimensional and
infinite dimensional.
Define $T^n_x:=\sum_{y \in X}m^{(n)}_{xy}$ and
$\varphi^{(n)}_{xy}:=\sum_{x_1,\ldots,x_{n-1} \in X \setminus\{y\}} m_{x x_1} m_{x_1 x_2} \cdots m_{x_{n-1} y}$
(by definition $\varphi^{(0)}_{xy}:=0$ for all $x,y \in X$).
$T^n_x$ is the expected number of particles alive at time $n$ when the initial state is a single
particle at $x$.
The interpretation of $\varphi^{(n)}_{xy}$ is more subtle. It plays in the BRW theory
the same role played the first-return probabilities in random walk theory
(see \cite[Section 1.C]{cf:Woess09}). Roughly speaking, $\varphi^{(n)}_{xy}$ is
the expected number of particles alive at $y$ at time $n$
when the initial state is just one particle at $x$ and the
process behaves like a BRW except that every particle reaching
$y$ at any time $i <n$ is immediately killed (before breeding).

Let us consider the following family of 1-dimensional generating functions
(depending on $x,y \in X$)
\[
\begin{split}
\Gamma(x,y|\lambda)&:=\sum_{n =0}^\infty m^{(n)}_{xy} \lambda^n,
\qquad \Phi(x,y|\lambda):=\sum_{n =1}^\infty \varphi_{xy}^{(n)} \lambda^n.
\end{split}
\]
%
It is easy to prove that $\Gamma(x,x|\lambda)= \sum_{i \in \N} \Phi(x,x|\lambda)^i$ for all $\lambda>0$, hence
\begin{equation}\label{eq:genfun1}
\Gamma(x,x|\lambda)= \frac{1}{1-\Phi(x,x|\lambda)},
\qquad \forall \lambda \in \C: |\lambda|< \left (\limsup_{n \in \N} \sqrt[n]{m^{(n)}_{xy}} \right )^{-1},
\end{equation}
and we have that $\left (\limsup_{n \in \N} \sqrt[n]{m^{(n)}_{xy}} \right )^{-1}=\max\{ \lambda 
\in {\mathbb R}:\Phi(x,x|\lambda)\leq 1\}$
for all $x \in X$. 
In particular $\Phi(x,x|1) \le 1$ if and only if
$\limsup_{n \in \N} \sqrt[n]{m^{(n)}_{xy}} \le 1$.

To the family $\{\mu_x\}_{x \in X}$ we can associate another generating function $G:[0,1]^X \to [0,1]^X$
which can be considered as an infinite dimensional power series (see also \cite[Section 3]{cf:BZ2}). More precisely,
for all $z \in [0,1]^X$ the function $G(z) \in [0,1]^X$ is defined as follows
\begin{equation}
\label{eq:genfun}
G(z|x):= \sum_{f \in S_X} \mu_x(f) \prod_{y \in X} z(y)^{f(y)}.
\end{equation}
Note that $G$ is continuous with respect to the \textit{pointwise convergence topology} of $[0,1]^X$  and nondecreasing
with respect to the usual partial order of $[0,1]^X$ (see \cite[Sections 2 and 3]{cf:BZ2} for further details);
note that $v <w$ means that $v(x) \le w(x)$ for all $x \in X$ and $v(x_0)<w(x_0)$ for some $x_0 \in X$.
All these generating functions will be useful for instance in Section~\ref{subsec:localglobal} to prove
Theorem~\ref{th:equiv1} and to discuss Examples~\ref{exm:noext} and \ref{exm:4.5}.
In Section~\ref{subsec:stronglocal} more properties of $G$ will be
established in order to prove some results related to local survival and strong local survival.

Note that the generating function $G$ can be explicitly computed, for instance, if
equation~\eqref{eq:particular1} holds. Indeed in this case it is straightforward
to show that
$G(z|x)=F(Pz(x))$
where $F(y)=\sum_{n=0}^\infty \rho(n)y^n$ and $Pz(x)=\sum_{y \in X} p(x,y)z(y)$.
In particular if $\rho(n)=\frac{1}{1+\bar \rho_x} (\frac{\bar \rho_x}{1+\bar \rho_x} )^n$
(for instance if we are dealing with the discrete-time counterpart of a continuous-time BRW, see
equation~\eqref{eq:counterpart}), we have
$G(z|x)=\frac{1}{1+\bar \rho_x(1-Pz(x))}$,
that is,
\begin{equation}\label{eq:Gcontinuous}
G(z)= \frac{\mathbf{1}}{\mathbf{1}+M(\mathbf{1}-z)}
\end{equation}
where $\mathbf{1}(x):=1$ for all $x \in X$, the ratio is to be intended as coordinatewise and
$Mv(x):= \sum_{y \in X} m_{xy}v(y)$ for all $v \in [0,1]^X$; in this case
$m_{xy}$ is given by equation~\eqref{eq:meanparticular}.

\subsection{Coupling}\label{subsec:1}

The family of BRWs can be extended to the more general class of \textit{truncated BRWs} where
a maximum of $m \in \N \cup \{\infty\}$ particles per site are allowed; we denote
this process as a BRW$_m$. The general dynamics is given by the following
recursive relation
\begin{equation}\label{eq:evolBRWm}
\eta^m_{n+1}(x)=m \wedge \sum_{y \in X} \sum_{i=1}^{\eta^m_n(y)} f_{i,n,y}(x)=
m \wedge \sum_{y \in X} \sum_{j=0}^\infty \ident_{\{\eta^m_n(y)=j\}} \sum_{i=1}^{j} f_{i,n,y}(x).
\end{equation}
Clearly the BRW$_\infty$ is the usual BRW and the BRW$_1$ is a \textit{contact process}.

In the following sections we want to compare two (or more)
truncated BRWs.
What we are going to state is a discrete-time analogous of a well-known
technique called \textit{coupling}. Roughly speaking, given two
processes $\{\eta_n\}_n$ and $\{\xi_n\}_n$, under certain conditions one can
find two possibly different processes  $\{\eta^\prime_n\}_n$ and $\{\xi^\prime_n\}_n$ with the same finite-dimensional
distribution as the original ones and such that
if $\eta_0^\prime \ge \xi_0^\prime$ then $\eta_n^\prime \ge \xi_n^\prime$ for all $n \in \N$.

The condition that we use is the following:
suppose we have two families $\mu=\{\mu_x\}_{x \in X}$ and
$\nu=\{\nu_x\}_{x \in X}$ such that
$\mu_x(F_x^{-1}(\cdot))=\nu_x(\cdot)$ for all $x\in X$ and for
some family of 
functions $\{F_x\}_{x\in X}$ such that
$F_x:\textrm{supp}(\mu_x)\rightarrow \textrm{supp}(\nu_x)$ and
$F_x(f) \le f$ for all $f \in \textrm{supp}(\mu_x)$.
The meaning of the maps $F_x$ is as follows:
given any possible offspring outcome $g$ of a particle living at $x$
(breeding according to the law $\nu_x$) there is a
set of outcomes which occurs with the same probability, namely $F_x^{-1}(g)$,
for a particle living at $x$ (breeding according to the law $\mu_x$);
furthermore the number of newborn particles in the first case is pointwise not larger than
the number of newborns in the second one. This suggests
that the BRW $(X, \mu)$, in some sense,
dominates the BRW $(X, \nu)$.
Under the previous condition, given $k \le m \le \infty$,
it is possible to construct a process $\{(\eta^m_n,\xi^k_n)\}_{n\in\N}$ such that
\begin{enumerate}
 \item
$\{\eta^m_n\}_{n\in\N}$ is
a BRW$_m$ behaving according to $\mu$;
\item
$\{\xi^k_n\}_{n \in \N}$ is
a BRW$_k$ behaving according to $\nu$;
\item
$\eta^m_0 \ge \xi^k_0$ implies $\eta^m_n \ge \xi^k_n$ for all $n\in\N$ a.s.
\end{enumerate}

To check (3), note that,
for any $x \in X$, given the family of random variables $\{f_{i,n,x}\}_{i,n \in \N}$
(with law $\mu_x$) then $\{F_x(f_{i,n,x})\}_{i,n \in \N}$ are iid with
common law $\nu_x$. Whence
the evolution equation of $\{\eta^m_n\}_{n \in \N}$ is \eqref{eq:evolBRWm} and,
similarly, $\{\xi^k_n\}_{n\in \N}$ satisfies
\begin{equation}\label{eq:evol2}
\xi^k_{n+1}(x)=k \wedge \sum_{y \in X} \sum_{i=1}^{\xi_n(y)} F_x \circ f_{i,n,y}(x),
\end{equation}
whence (3) follows by induction using equations \eqref{eq:evolBRWm} and \eqref{eq:evol2}.
A typical choice for the family of functions $\{F_x\}_{x \in X}$ is
$F_x(f):=f|_{Y}$ (where $Y\subseteq X$) which can be seen as a
(truncated) BRW restricted to $Y$, that is, all the offsprings
sent outside $Y$ are killed.

This procedure of comparison is called \textit{coupling}
between $\{\eta^m_n\}_{n \in \N}$ and $\{\xi^k_n\}_{n \in \N}$.
We note that if $\{\eta^m_n\}_{n \in \N}$ dies out locally (resp.~globally)
a.s.~then $\{\xi^k_n\}_{n \in \N}$ dies out locally (resp.~globally)
a.s.
More generally a 
coupling
between $\{\eta^m_n\}_{n \in \N}$ and $\{\xi^k_n\}_{n \in \N}$
is a choice of a common law $\{\zeta_{x}\}_{x \in X}$ for the process
$\{(\eta^m_n,\xi^k_n)\}_{n \in \N}$ such that $\zeta_{x}((f,g): f \ge g,\, f,g \in S_X) =1$ for all $x\in X$ and
$\sum_{g \in S_X}  \zeta_{x}((f,g))=\mu_x(f)$, $\sum_{f \in S_X}  \zeta_{x}((f,g))=\nu_x(g)$. In many
situations this construction of $\{\zeta_{x}\}_{x \in X}$ can be carried out effortlessly.

\section{Local, strong local and global survival}
\label{sec:localglobal}

\subsection{Local and global survival}
\label{subsec:localglobal}

Consider the discrete-time BRW generated by the family $\mu=\{\mu_x\}_x$
of probabilities and
suppose now that the process starts with one particle at $x_0$, hence
$\eta_0=\delta_{x_0}$. In this section we want to find conditions for
global, local and strong local survival.
Recall that if $X$ is finite then local survival is equivalent to global
survival: this is trivial for an irreducible matrix $M$; in the general
case global survival, starting from $x_0$, is equivalent to
local survival at some $y \in X$ such that $x_0 \to y$ (the same arguments of
\cite[Remark 4.4]{cf:BZ2} apply).

\begin{teo}\label{th:equiv1}
Let $(X,\mu)$ be a BRW.
\begin{enumerate}
\item There is
local survival starting from $x_0$ if and only if $\limsup_{n\to\infty}\sqrt[n]{m^{(n)}_{x_0x_0}}>1$.
\item There is
global survival starting from $x_0$ if and only if there exists $z\in [0,1]^X$, $z(x_0)<1$,
such that $G(z|x)\le z(x)$, for all $x$.
\item
If there is global survival starting from $x_0$, then
there exists $v\in[0,1]^X$, $v(x_0)>0$, such that
\begin{itemize}
\item[a)] $Mv \ge v$ 
\item[b)] for all $x$, $Mv(x)=v(x)$ if and only if   
$G(\mathbf 1 -(1-t) v;x)=1-(1-t)v(x), \forall t \in [0,1]$.
\end{itemize}
\item If there is global survival starting from $x_0$, then $\liminf_{n \in \N} \sqrt[n]{\sum_{x \in X} m^{(n)}_{x_0x}}\ge 1$.
\item If $X$ is a finite set then there is global survival starting from $x_0$ if and only if
\break $\liminf_{n \in \N} \sqrt[n]{\sum_{x \in X} m^{(n)}_{x_0x}}> 1$.
\end{enumerate}
\end{teo}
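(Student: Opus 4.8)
The plan is to treat the five items in an order that builds on itself, not in the order stated. I would begin with~(1), the characterization of local survival. The number $\eta_n(y)$ equals the number of reproduction trails to $(y,n)$, so $\E^{\delta_{x_0}}(\eta_n(x_0)) = m^{(n)}_{x_0 x_0}$. For the ``only if'' direction, if $\limsup_n \sqrt[n]{m^{(n)}_{x_0 x_0}}\le 1$ then $\Phi(x_0,x_0\,|\,1)\le 1$ by the identity~\eqref{eq:genfun1}, and I would argue that $\Phi(x_0,x_0\,|\,1)$ dominates the expected number of particles that ever return to $x_0$ along a first-return decomposition of trails; a Borel--Cantelli / first-moment argument on the event $\{\eta_n(x_0)>0\ \text{i.o.}\}$ then gives a.s.\ local extinction. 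For the ``if'' direction, when $\limsup_n \sqrt[n]{m^{(n)}_{x_0 x_0}}>1$ one has $\Phi(x_0,x_0\,|\,\lambda)>1$ for some $\lambda<1$ close to $1$; here I would build a Galton--Watson-type sub-process counting, generation by generation, the particles that return to $x_0$ (with a suitable geometric discount to make the bookkeeping honest), show its mean offspring number exceeds $1$, invoke Remark~\ref{rem:assump1} together with Assumption~\ref{assump:1} to rule out the degenerate $\mu_{x_0}(1)=1$ case, and conclude survival of this embedded branching process, hence $\eta_n(x_0)>0$ infinitely often wpp.

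Next I would do~(2). The ``if'' direction: given $z$ with $G(z\,|\,x)\le z(x)$ everywhere and $z(x_0)<1$, I would show by induction on $n$ that $\E^{\delta_{x_0}}\!\bigl(\prod_y z(y)^{\eta_n(y)}\bigr)\le z(x_0)$ for all $n$, using the branching property and monotonicity/continuity of $G$ (cf.~\eqref{eq:genfun}); letting $n\to\infty$ and comparing with the extinction probability vector $q$, one gets that the probability of global extinction from $x_0$ is at most $z(x_0)<1$. The ``only if'' direction: the global extinction probability vector $q$ (with $q(x) = \pr^{\delta_x}(\exists n:\ \sum_w\eta_n(w)=0)$) is a fixed point $G(q)=q$ by a standard one-step analysis, and global survival from $x_0$ means exactly $q(x_0)<1$; so $z=q$ works. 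This is essentially \cite[Theorems 4.1, 4.3]{cf:BZ2} and I would cite that framework for the measurability and limit interchanges.

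For~(3), assume global survival, so by~(2) there is $z$ with $z(x_0)<1$ and $G(z)\le z$; I would set $v:=\mathbf 1 - z$ (actually $v := \mathbf 1 - q$ with the minimal such $z$), so $v(x_0)>0$. Convexity of each coordinate $G(\cdot\,|\,x)$ as a power series with nonnegative coefficients, together with $G(\mathbf 1)=\mathbf 1$, gives $\mathbf 1 - G(\mathbf 1 - sv)\ge s\bigl(\mathbf 1 - G(\mathbf 1 - v)\bigr)\ge sv$ for $s\in[0,1]$ — differentiating the tangent-line-at-$\mathbf 1$ estimate — and since the differential of $G$ at $\mathbf 1$ is $M$, taking $s\to 0^+$ yields $Mv\ge v$, giving~(a). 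For~(b), equality $Mv(x)=v(x)$ pins down the convex function $t\mapsto 1-G(\mathbf 1-(1-t)v\,|\,x)$ between its chord and its tangent at $t=1$, forcing it to be affine, i.e.\ $G(\mathbf 1-(1-t)v\,|\,x)=1-(1-t)v(x)$ for all $t$; conversely affinity forces the derivative at the endpoint to match the chord slope, i.e.\ $Mv(x)=v(x)$. Then~(4) is immediate: from $Mv\ge v$ and $v\ge 0$, $v\not\equiv 0$, iterate to get $M^n v\ge v$, evaluate at $x_0$, bound $v$ above by $1$ and below by $v(x_0)>0$ on its support, and deduce $\sum_x m^{(n)}_{x_0 x}\ge v(x_0)$, whence $\liminf_n\sqrt[n]{\sum_x m^{(n)}_{x_0 x}}\ge 1$.

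Finally~(5), the finite-state case. One direction is~(4). For the converse, suppose $\liminf_n\sqrt[n]{\sum_x m^{(n)}_{x_0 x}}>1$; since $X$ is finite I can pass to an irreducible communicating class $C$ with $x_0\to C$ realizing the growth, and on a finite irreducible class Perron--Frobenius gives a common growth rate $r>1$ with a strictly positive left/right eigenvector. Here the ``possibility of mapping into a multitype Galton--Watson process'' mentioned in the introduction is the right tool: on a finite set the BRW restricted (via the coupling of Section~\ref{subsec:1}, with $F_x(f)=f|_C$) to $C$ \emph{is} a multitype Galton--Watson branching process with mean matrix the Perron--Frobenius block, whose classical theory (\cite[Section II.2]{cf:Harris63}) says it survives wpp iff its spectral radius exceeds~$1$ — using Assumption~\ref{assump:1} and Remark~\ref{rem:assump1} to guarantee nonsingularity and rule out the $m_{xx}=1$ degeneracy. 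Survival of the dominated process forces global survival of the original one, and by the reduction at the start of Section~\ref{subsec:localglobal} (global survival from $x_0$ $\iff$ local survival at some $y$ with $x_0\to y$) this closes the loop.

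The main obstacle I anticipate is the ``if'' direction of~(1): making the embedded-return-process argument rigorous requires care, because the naive GW process of ``particles returning to $x_0$'' is not literally a GW process (return times are unbounded, and one must discount by a factor $\lambda^{\text{time}}$ to convert $\Phi(x_0,x_0\,|\,\lambda)>1$ into a genuine supercritical mean). I would handle this by working with trails and a generation-indexed, $\lambda$-weighted second-moment/first-moment comparison, or alternatively by citing the now-standard argument from \cite[Theorem 2.4]{cf:M08} or \cite[Theorem 4.7]{cf:BZ2}. The convexity manipulations in~(3) are routine but must be stated at the level of each coordinate $G(\cdot\,|\,x)$, treating $G$ as an infinite power series and justifying termwise differentiation via monotone convergence — the finite-first-moment hypothesis $\sup_x\sum_y m_{xy}<\infty$ is exactly what makes $M$ act boundedly on $[0,1]^X$ and legitimizes identifying $DG(\mathbf 1)=M$.
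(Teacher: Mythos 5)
The one genuine gap is in part (5). You write ``One direction is~(4)'', meaning that global survival $\Rightarrow \liminf_n\sqrt[n]{\sum_x m^{(n)}_{x_0x}}>1$ should follow from part (4). It does not: part (4) only delivers the weak inequality $\liminf_n\sqrt[n]{\sum_x m^{(n)}_{x_0x}}\ge 1$, and the strict version is false on infinite $X$ (the paper's Example~\ref{exm:4.5} exhibits a globally surviving BRW with $\liminf_n\sqrt[n]{\sum_x m^{(n)}_{x_0x}}=1$), so the strict inequality must use finiteness in an essential way and cannot be a corollary of (4). The repair uses ingredients you already have but did not assemble for this direction: on finite $X$, global survival from $x_0$ is equivalent to local survival at some $w$ with $x_0\to w$, hence by part (1) to $\limsup_n\sqrt[n]{m^{(n)}_{ww}}>1$ for some such $w$; one then needs the finite-matrix identity $\liminf_n\sqrt[n]{\sum_x m^{(n)}_{x_0x}}=\max_{w:\,x_0\to w}\limsup_n\sqrt[n]{m^{(n)}_{ww}}$ (lower bound by supermultiplicativity through $w$, upper bound by decomposing paths according to the classes they visit), which is exactly what the paper uses. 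Your Perron--Frobenius / multitype Galton--Watson argument for the converse direction of (5) is fine and is a legitimate variant of the same idea.

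Everything else follows essentially the paper's own route: the embedded Galton--Watson process of returns to $x_0$ with mean $\Phi(x_0,x_0|1)$ for (1), the monotone iteration of $G$ against the extinction vector $\bar q$ for (2), convexity of $t\mapsto G(\mathbf 1-(1-t)v\,|\,x)$ with $v=\mathbf 1-z$ for (3), and iteration of $Mv\ge v$ for (4). Two small remarks on (1): the $\lambda$-discounting you anticipate needing is unnecessary, because the generations indexed by the number of cycles through $x_0$ already form a genuine Galton--Watson process (independence of distinct progenies is all that is needed; the unbounded return times are irrelevant since no time variable is being tracked); and Assumption~\ref{assump:1} is really needed in the \emph{extinction} direction, to exclude the singular critical case $\Phi(x_0,x_0|1)=1$ with survival, not in the survival direction, where supercriticality of the embedded process alone suffices.
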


\begin{proof}
\begin{enumerate}
\item Fix $x_0 \in X$,
consider a path $\Pi:=\{x_0, x_1, \ldots, x_n=x_0\}$ and
consider its number of cycles $\#\{i=1,
\ldots,n:x_i=x_0\}$; the expected number of trails
based on such a path
is $\prod_{i=0}^{n-1} m_{x_i x_{i+1}}$.
This is also the expected number of particles
living at $x_0$,
descending from the original particle
at $x_0$ and whose genealogy is described by the path $\Pi$, that is, their mothers were at $x_{n-1}$, their
grandmothers at $x_{n-2}$ and so on.
We associate to the BRW starting at $x_0$
a Galton--Watson branching process with a different
time scale as in
\cite[Theorem 3.1]{cf:BZ} and \cite[Theorems 4.1 and 4.7]{cf:BZ2} where
the $n$-th generation is the set of particles living at $x_0$ whose
trail are based on paths with $n$ cycles.
This process is nonsingular due to Assumption~\ref{assump:1}, and its survival is equivalent to the local
survival of the BRW. The expected number of children in this branching process is $\Phi(x,x|1)$,
thus we have a.s.~local extinction if and only if $\Phi(x,x|1)\leq 1$, that is,
$\limsup_{n \in \N} \sqrt[n]{m^{(n)}_{xy}} \le 1$.
\item
Let $\bar q_n (x)$ and $\bar q(x)$ be the probability of global extinction before or at the $n$-th generation and the
probability of global extinction respectively, starting from a single
initial particle at $x$. Clearly $\bar q_{n+1}=G(\bar q_n)$ and $\bar q_n \to \bar q$ as $n \to \infty$.
If $\bar q(x_0)<1$ then take $z=\bar q$ as a solution of $G(z)\le z$ (remember that $G$ is continuous).
On the other hand, if there exists $z \in [0,1]^X$ such that
$z(x_0)<1$ and $G(z) \le z$, since $G$ is nondecreasing
and $\bar q_0=\mathbf 0 \le z$ then $\bar q_n \le z$ for all $n \in \N$,
hence $\bar q \le z$. Thus $\bar q(x_0)<1$.
\item
Let $z$ such that $G(z) \le z$, $z(x_0)<1$.
Define $v=\mathbf 1 - z$, take the derivative of the convex function $\phi(t):=G(\mathbf 1 -(1-t) v;x)-1+(1-t)v(x)$ at $t=1$ and
remember that  $\phi(0)\le \phi(1)=0$.
\item
If there is global survival starting from $x_0 \in X$ then there exists $v\in [0,1]^X$
such that $v(x_0)>0$ and $Mv \ge v$. Hence $M^nv \ge v$ for all $n \in \N$,
that is, $\sum_{y \in X} m^{(n
)}_{xy} v(y) \ge v(x)$; in particular, $\sum_{y \in X} m^{(n
)}_{x_0y} \ge v(x_0)>0$ and this implies
$\liminf_{n\to\infty} \sqrt[n]{\sum_y m^{(n)}_{x_0y}} \ge 1$.
\item
Since $X$ is finite there is global survival starting from $x_0$ if and only if there is
local survival starting from some $w \in X$ such that $x_0 \to w$, 
that is, 
$\limsup_{n \in \N} \sqrt[n]{m^{(n)}_{ww}}>1$.
Since $M$ is finite, it is easy to show that
\[
\liminf_{n \in \N} \sqrt[n]{\sum_{x \in X} m^{(n)}_{x_0x}}
= \max_{w \in X: x_0 \to w} \limsup_{n \in \N} \sqrt[n]{m^{(n)}_{ww}}
\]
whence there is global survival if and only if $\liminf_{n \in \N} \sqrt[n]{\sum_{x \in X} m^{(n)}_{x_0x}} >1$
(see also Assumption~\ref{assump:1} and Remark~\ref{rem:assump1}).
\end{enumerate}
\end{proof}

First of all, observe that it is easy to show, by using supermultiplicative arguments,
that  $\liminf_{n\to\infty} \sqrt[n]{\sum_y m^{(n)}_{xy}} \ge \limsup_{n\to\infty} \sqrt[n]{m^{(n)}_{xx}}$.
%
%
In terms of survival, studying a continuous-time BRW with
rates $\{\lambda k_{xy}\}_{x,y \in X}$ is equivalent  to studying
its discrete-time counterpart (that is, a BRW where $\{\mu_x\}_{x \in X}$ is
given by equations \eqref{eq:particular1} and \eqref{eq:counterpart}).
If we apply Theorem~\ref{th:equiv1} to a discrete-time counterpart then
we obtain \cite[Theorems 4.1, 4.3 and 4.7]{cf:BZ2}.
Moreover, according to \cite[Theorem 4.2]{cf:BZ2}(c), for the discrete-time counterpart
of a continuous-time BRW, global survival starting from
$x_0$ is equivalent to the existence of
$v\in[0,1]^X$, $v(x_0)>0$, such that
$Mv \ge v$. This is a necessary condition for global survival for
all discrete-time BRWs.
Finally, note that the proof of part (1) of the previous theorem
holds as well even
if $\{m_{xy}\}$ is unbounded or $m_{xy}=+\infty$ for some $x,y \in X$.
An independent
proof was given in \cite[Theorem 2.4]{cf:M08} using a different, more analytic, technique.

Speaking of global survival, it is easy to show that, given any solution of $G(z) \le z$, then $z(x)$
is an upper bound for the probability of extinction $\bar q(x)$. Moreover the existence of a solution
as in Theorem~\ref{th:equiv1}(2) is equivalent to the existence of a solution of $G(z)=z$
such that $z(x_0)<1$. From the proof we have that, if $\bar q$ is the possibly infinite-dimensional vector of
extinction probabilities, then $\bar q$ is the smallest solution of  $G(z)=z$ (and of $G(z) \le z$);
for this solution,
$z(x)<1$ simultaneously for all $x$ such that there is global solution starting from $x$.
Thus, if a BRW is irreducible and there is global survival
starting from one vertex then the solution $\bar q$ satisfies $\bar q(x)<1$ for all $x \in X$.
For a more detailed discussion on the generating function $G$ and its properties we refer to
\cite[Sections 2 and 3]{cf:BZ2}.
Of course, the possibility of computing an explicit solution of the inequality $G(z) \le z$
relies on the explicit knowledge of the generating function $G$. This can be done
in a few cases: for instance if equation~\eqref{eq:particular1} holds
(see Section~\ref{subsec:genfun}); in particular, for any discrete-time
counterpart of a continuous-time BRW, $G$ satisfies equation~\eqref{eq:Gcontinuous}.
We show how to manage this inequality in Examples~\ref{exm:noext} and \ref{exm:4.5}.
As for the inequalities in Theorem~\ref{th:equiv1}(1), (4) and (5), they can be easily
checked under some regularity conditions
(such as \textit{transitivity}, see Section~\ref{subsec:stronglocal} for the definition) or when
the BRW is an $\mathcal{F}$-BRW (see Definition~\ref{def:fBRW}).
In particular it has been shown in \cite{cf:GMPV09} that the inequality in Theorem~\ref{th:equiv1}(1)
can still be checked in particular random environments.


Besides, according to Theorem~\ref{th:equiv1}(1) the local survival depends only on $M$, hence if we have two
BRWs, say $(X,\mu)$ and $(X,\nu)$ with first-moment matrices $M$ and $\overline M$ respectively,
satisfying $m_{xy} \ge \overline m_{xy}$ (for all $x,y \in X$) then the local survival at $x_0 $ for
$(X,\nu)$ implies the local survival at $x_0$ for $(X,\mu)$. In Example~\ref{exm:noext}
we show that, for a general BRW,
the global survival does not depend only on $M$ nevertheless
a characterization of global survival in terms of $M$ alone holds for special classes of BRWs.
The first example is given by the class of discrete-time counterparts of continuous-time BRWs
and this is due to Theorem~\ref{th:equiv1}(2) and to the fact that, in this case, $G$ depends only
on $M$ (see equation~\eqref{eq:Gcontinuous}).
Another class is described by the following definition.

\begin{defn}\label{def:fBRW}
We say that a BRW $(X, \mu)$ is locally isomorphic to a BRW $(Y,\nu)$ if there exists a
surjective map $g:X\to Y$ such that
\begin{equation}
\label{eq:fgraph}
\nu_{g(x)}(f)=\mu_x\left(h:\forall y\in Y, f(y)=\sum_{z\in g^{-1}(y)}h(z)\right),
\quad \forall f\in S_Y.
\end{equation}
We say that $(X, \mu)$ is a $\mathcal F$-BRW if it is locally isomorphic to some
BRW $(Y,\nu)$ on a finite set $Y$.
\end{defn}

The idea behind the previous definition is that $g$ acts like a ``projection''
from $X$ onto $Y$ and, from the point of view of the BRW, all the vertices
in $g^{-1}(y)$ looks similar.
Besides, the class of $\mathcal F$-BRWs extends the class of multitype BRWs:
an $\mathcal F$-BRW identifies a multitype BRW (see the proof of
Theorem~\ref{th:fgraphf} for details) in a way that they have the same global behavior. Of course,
they may have different local behaviors.
We note that \textit{quasi-transitive} BRWs
(see Section~\ref{subsec:local} for the formal definition) are $\mathcal F$-BRWs.
Another example of an  $\mathcal F$-BRW is given by a BRW satisfying equation~\eqref{eq:particular1}
where $\rho_x$ is independent of $x \in X$, say $\rho_x=\rho$ for all $x \in X$; in this case one simply chooses $Y=\{0\}$,
that is,
a branching process with reproduction law $\rho$.
There are $\mathcal F$-BRWs which are not quasi-transitive: an
example is the discrete-time counterpart of the continuous-time BRW
given in \cite[Example 3.1]{cf:BZ}.

The function $g$ induces a map $\pi_g: S_X \to S_Y$ defined as
$\pi_g(f)(y)=\sum_{x \in g^{-1}(y)} f(x)$ hence equation~\eqref{eq:fgraph}
becomes  $\nu_{g(x)}(\cdot)= \mu_{x}(\pi_g^{-1}(\cdot))$.
Clearly if $\{\eta_n\}_{n\in \N}$ is a realization of $(X,\mu)$ then
$\{\pi_g(\eta_n)\}_{n\in \N}$ is a realization of $(Y,\nu)$.
Moreover it is easy to show that, for all $x \in X, \, y \in Y$,
$\widetilde m_{g(x) y}:=\sum_{w \in S_Y} w(g(x)) \nu_{y}(w)=
\sum_{f \in S_X} \sum_{z \in g^{-1}(y)} f(z) \mu_x(f)=
\sum_{z \in g^{-1}(y)} m_{xz}$ 
(that is, $\widetilde m_{g(x)\, y}= \pi_g(m_{x\, \cdot})(y)$).
This means that the expected number of offsprings
at $y$ of a particle living at $g(x)$ (on the projected BRW $(Y,\nu)$)
is the sum of the expected numbers of offsprings at $z$
of a particle living at $x$ (on the BRW $(X,\mu)$) over all $z \in X$ whose
projection is $y$.
Thus $
\sum_{y \in Y} \widetilde m_{g(x)\, y}
=\sum_{z \in X} m_{xz}
$.
By induction on $n \in \N$ one can prove that, for all $x \in X, \, y \in Y, \, n \in \N$, 
we have $\widetilde m^{(n)}_{g(x) y}=\sum_{z \in g^{-1}(y)} m^{(n)}_{xz}$ whence
$\sum_{y \in Y}\widetilde m^{(n)}_{g(x) y}=\sum_{z \in X} m^{(n)}_{xz}$.
It is not difficult to show that $(X,\mu)$ is locally isomorphic to $(Y,\nu)$
if and only if
$G_X(z \circ g|x)=G_Y(z|g(x))$ for all $z \in [0,1]^Y$, $x \in X$.

The following result characterizes the global survival of a
$\mathcal F$-BRW in terms of $M$.

\begin{teo}\label{th:fgraphf}
Let $(X,\mu)$ is locally isomorphic to $(Y,\nu)$ and
consider the following:
\begin{enumerate}
\item there is global survival for $(X,\mu)$ starting from $x_0 \in X$,
\item there is global survival for $(Y,\nu)$ starting from $g(x_0) \in Y$,
\item $\liminf_{n\to\infty} \sqrt[n]{\sum_y m^{(n)}_{x_0y}}>1$;
\end{enumerate}
then $(1) \Longleftrightarrow (2)$. Moreover if $Y$ is finite
(hence $X$ is an $\mathcal F$-BRW) then $(3) \Longleftrightarrow (2)$.
\end{teo}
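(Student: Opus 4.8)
The plan is to treat the two assertions separately: establish $(1)\Leftrightarrow(2)$ by pushing the process forward through $\pi_g$, and then obtain $(2)\Leftrightarrow(3)$ in the finite case by reading condition~(3) off the first-moment matrix of $(Y,\nu)$ and invoking Theorem~\ref{th:equiv1}(5).

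For $(1)\Leftrightarrow(2)$ I would argue probabilistically. Let $\{\eta_n\}_{n\in\N}$ be a realization of $(X,\mu)$ started from $\eta_0=\delta_{x_0}$; then $\{\pi_g(\eta_n)\}_{n\in\N}$ is a realization of $(Y,\nu)$ started from $\pi_g(\delta_{x_0})=\delta_{g(x_0)}$. The point is that $\pi_g$ preserves the total population, $\sum_{y\in Y}\pi_g(\eta_n)(y)=\sum_{z\in X}\eta_n(z)$ for every $n$, because $\{g^{-1}(y)\}_{y\in Y}$ partitions $X$. Hence the global-survival events of the two processes literally coincide on the same probability space, so $\pr^{\delta_{x_0}}$ of global survival for $(X,\mu)$ equals $\pr^{\delta_{g(x_0)}}$ of global survival for $(Y,\nu)$, and in particular one is positive if and only if the other is. Alternatively, one can stay inside the generating-function formalism: using the identity $G_X(z\circ g|x)=G_Y(z|g(x))$ together with Theorem~\ref{th:equiv1}(2), a solution $w$ of $G_Y(w)\le w$ with $w(g(x_0))<1$ lifts to $w\circ g$, which gives $(2)\Rightarrow(1)$; for the converse one notes $G_X^n(\mathbf 0)=G_Y^n(\mathbf 0)\circ g$ by induction, so the minimal fixed points obey $\bar q_X=\bar q_Y\circ g$, whence $\bar q_X(x_0)<1\Leftrightarrow\bar q_Y(g(x_0))<1$.

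For the ``moreover'' part, assume $Y$ finite and let $\widetilde M$ be the first-moment matrix of $(Y,\nu)$. By the identity $\widetilde m^{(n)}_{g(x_0) y}=\sum_{z\in g^{-1}(y)}m^{(n)}_{x_0 z}$ proved before the statement, summing over $y\in Y$ yields $\sum_{y\in Y}\widetilde m^{(n)}_{g(x_0) y}=\sum_{z\in X}m^{(n)}_{x_0 z}$ for every $n$, so condition~(3) is verbatim the inequality $\liminf_{n}\sqrt[n]{\sum_{y\in Y}\widetilde m^{(n)}_{g(x_0) y}}>1$ for the finite BRW $(Y,\nu)$. Theorem~\ref{th:equiv1}(5) applied to $(Y,\nu)$ then says precisely that this inequality is equivalent to global survival of $(Y,\nu)$ from $g(x_0)$, i.e.\ to~(2); combined with $(1)\Leftrightarrow(2)$ this finishes the argument.

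The only place where a hypothesis is really consumed is this last appeal to Theorem~\ref{th:equiv1}(5): it rests on Assumption~\ref{assump:1} being in force for $(Y,\nu)$, and this cannot be dropped --- if some reachable class of $Y$ consisted of vertices whose particles almost surely have exactly one child inside that class, then $(Y,\nu)$ would survive globally while $\liminf_{n}\sqrt[n]{\sum_{y\in Y}\widetilde m^{(n)}_{g(x_0) y}}=1$, breaking $(2)\Leftrightarrow(3)$ (the single-child process on $\N$ that is locally isomorphic to the one-type ``exactly one child'' process is exactly such a case). So the main care in writing the proof is to record that $(Y,\nu)$, like every BRW considered in the paper, satisfies Assumption~\ref{assump:1}; the rest is bookkeeping with the local-isomorphism identities and Theorem~\ref{th:equiv1}. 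If one prefers the generating-function route for $(1)\Leftrightarrow(2)$, the only subtle point there is $(1)\Rightarrow(2)$, which must go through the minimal fixed points rather than an arbitrary solution of $G_X(z)\le z$, since such a solution need not factor through $g$.
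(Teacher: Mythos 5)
Your proposal is correct, and the second half — reading condition (3) as the corresponding inequality for the finite first-moment matrix $\widetilde M$ of $(Y,\nu)$ via $\sum_{y\in Y}\widetilde m^{(n)}_{g(x_0)y}=\sum_{z\in X}m^{(n)}_{x_0z}$ and then invoking Theorem~\ref{th:equiv1}(5) — is exactly what the paper does. For $(1)\Leftrightarrow(2)$ the paper takes only your ``alternative'' route: it proves $G^{(n)}_X(z\circ g|x)=G^{(n)}_Y(z|g(x))$ by induction, deduces $\bar q^X_n(x)=\bar q^Y_n(g(x))$, and passes to the limit to identify the minimal fixed points, so that the extinction probabilities coincide. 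Your primary argument — that $\pi_g$ preserves the total population, so on a common probability space the global-survival events of $\{\eta_n\}_n$ and $\{\pi_g(\eta_n)\}_n$ are literally the same event — is more direct and gives a slightly stronger conclusion (equality of survival probabilities without any detour through $G$); it rests only on the fact, already recorded in the paper, that $\{\pi_g(\eta_n)\}_n$ is a realization of $(Y,\nu)$. Both routes are sound, and your caveat that a general solution of $G_X(z)\le z$ need not factor through $g$ correctly explains why the generating-function direction $(1)\Rightarrow(2)$ must go through the minimal fixed point. Your observation about Assumption~\ref{assump:1} is also well taken and goes beyond the paper: local isomorphism does not automatically transfer that assumption to $(Y,\nu)$ (the deterministic one-child walk on $\N$ projects onto the singular one-type process), and since Theorem~\ref{th:equiv1}(5) consumes it, the equivalence $(2)\Leftrightarrow(3)$ genuinely requires $(Y,\nu)$ to satisfy it — a hypothesis the paper leaves implicit in its standing convention.
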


\begin{proof}
\indent $(1) \Longleftrightarrow (2)$.
I
It is easy to show, by induction on $n \in \N$, that
$G^{(n)}_X(z \circ g|x)=G^{(n)}_Y(z|g(x))$ for all $z \in [0,1]^Y$, $x \in X$ (where $G^{(n+1)}(z)=G^{(n)}(G(z))$).
This implies that $\bar q^Y_{n}(g(x))=\bar q^X_{n}(x)$ for all $n \in \N$. Hence,
as $n \to \infty$, we have that the minimal fixed points
of these generating functions satisfy $\bar q^Y(g(x))=\bar q^X(x)$.\\
\indent $(1) \Longleftrightarrow (3)$.
Since, 
for all $n \in \N$, we have
$\sum_{z \in X} m^{(n)}_{xy}=\sum_{y \in Y} \widetilde m^{(n)}_{g(x)y}$
then
\[
\liminf_{n \in \N} \sqrt[n]{\sum_{x \in X} m^{(n)}_{x_0x}} = \liminf_{n \in \N} \sqrt[n]{\sum_{y \in Y} \widetilde m^{(n)}_{g(x_0)y}}.
\]
The claim follows from Theorem~\ref{th:equiv1}(5) being $Y$ finite.
\end{proof}

Since, within some classes, the global behavior can be characterized completely by $M$, one can wonder if the same holds for a general BRW
 or, alternatively, if given two generic BRWs with the same
first-moment matrix then they have the same global behavior. In particular
one could conjecture that at least one of the two necessary conditions
given in Theorem~\ref{th:equiv1}(3)~and~(4) is also sufficient.
All these conjectures are false as the following example shows
(the main tool is Theorem~\ref{th:equiv1}(2)).

\begin{exmp}\label{exm:noext}

Let $X=\mathbb N$ and consider the family of BRWs $(\mathbb N,\mu)$ with
$\mu_i=p_i \delta_{n_i \ident_{\{i+1\}}} +(1-p_i)\delta_{\bf 0}$
(where $\ident_{\{i+1\}} \in S_{\mathbb N}$ is defined by
$\ident_{\{i+1\}}(x)=1$ if $x=i+1$ and $0$ otherwise).
Roughly speaking, each particle at $i$ has $n_i$ children at $i+1$ with probability $p_i$ and
no children at all with probability $1-p_i$.
According to Theorem~\ref{th:equiv1}(2) global survival starting from $0$ is equivalent
to the existence of $z\in [0,1]^{\mathbb N}$, $z(0)<1$,
such that $G(z|i)\le z(i)$, for all $i$ where
$G(z|i)=p_i z(i+1)^{n_{i}} +1-p_i$. Note that $pz^n+1-p \to 1$
if $p\to 0$ or $z \to 1$.

Clearly if $n_i=n$, $p_i=p$ and $np>1$ the BRW survives globally
(take for instance $n=4$ and $p=1/2$).
Let us suppose that $p_i=2/n_i$. We construct iteratively
a sequence $\{n_i\}_{i \in \mathbb N}$ such that the unique solution
of $G(z)\le z$ is $z(i)=1$ for all $i \in \mathbb N$.

Clearly $G(z)\le z$ implies
\begin{equation}\label{eq:partsyst}
\begin{cases}
z(0) \ge \frac{2}{n_0} z(1)^{n_1} + 1-\frac{2}{n_0}\\
z(1) \ge \frac{2}{n_1} z(2)^{n_2} + 1-\frac{2}{n_1}\\
\ldots\\
z(k) \ge \frac{2}{n_k} z(k+1)^{n_k} + 1-\frac{2}{n_k}\\
z(k+1) \ge 1-\frac{2}{n_{k+1}}.\\
\end{cases}
\end{equation}
for all $k \in \mathbb N$. Let $n_0=4$ and suppose
we already fixed $\{n_i\}_{i=0}^k$. If $n_{k+1} \to \infty$ then
a solution of equation~\eqref{eq:partsyst} satisfies
$z(i) \to 1$ for all $i \le k+1$. Choose $n_{k+1}$ such that
$z(i) \ge k/(k+1)$ for all $i \le k+1$. This implies
that the unique solution of the family of systems (depending on $k$)
given by equation~\eqref{eq:partsyst} is $z(i)=1$ for all $i \in \mathbb N$.
Thus this is the only solution of $G(z)\le z$ and the BRW does not
survive globally a.s.
In this case $\liminf_{n\to\infty} \sqrt[n]{\sum_y m^{(n)}_{x_0y}}=2$, thus
this example shows in particular that $\liminf_{n\to\infty} \sqrt[n]{\sum_y m^{(n)}_{x_0y}}>1$
does not imply, in general, global survival.

The first-moment matrix of the BRW above is
not  irreducible and the BRW can be identified with a time-inhomogeneous branching
process; a slight modification allows us to construct an irreducible BRW.
We just sketch the main steps.

Again let $X=\mathbb N$ and consider the family of BRWs
$\mu_i=p_i \delta_{n_i \ident_{\{i+1\}}+\ident_{\{i-1\}}} +(1-p_i)\delta_{\bf 0}$ (for all $i\ge 1$) and
$\mu_0=p_0 \delta_{n_0 \ident_{\{1\}}} +(1-p_0)\delta_{\bf 0}$.
In this case each particle at $i \ge 1$ has $n_i$ children at $i+1$ and $1$ at $i-1$ with probability $p_i$ and
no children at all with probability $1-p_i$; each particle at $0$ has the same behavior as in the previous example.
The generating function $G$ is
\[
G(z|i)=
 \begin{cases}
p_i z(i+1)^{n_{i}}z(i-1) +1-p_i & i \ge 1\\
p_0 z(1)^{n_{0}} +1-p_0 & i =0.
 \end{cases}
\]
$G(z)\le z$ implies, for all $k$,
\begin{equation}\label{eq:partsyst2}
\begin{cases}
z(0) \ge p_0 z(1)^{n_0} + 1-p_0\\
z(1) \ge p_1 z(2)^{n_1}z(0) + 1-p_1\\
\ldots\\
z(k) \ge p_k z(k+1)^{n_k}z(k-1) + 1-p_k\\
z(k+1) \ge 1-p_{k+1}.\\
\end{cases}
\end{equation}
It is not difficult to prove that, if  $p_{k+1} \to 0$ then $z(k+1)\to 1$) and
the set of solutions of equation~\eqref{eq:partsyst2} is eventually contained in any $\varepsilon$-enlargements of the set of vectors
$(z_0(1), z_0(2), \ldots, z_0(k),1)$, where
$(z_0(1), z_0(2), \ldots, z_0(k))$ is ranging in the set of solutions of
\begin{equation}\label{eq:partsyst3}
\begin{cases}
z(0) \ge p_0 z(1)^{n_0} + 1-p_0\\
z(1) \ge p_1 z(2)^{n_1}z(0) + 1-p_1\\
\ldots\\
z(k-1) \ge p_{k-1} z(k)^{n_{k-1}}z(k-2) + 1-p_{k-1}\\
z(k) \ge p_k z(k-1) + 1-p_{k}.\\
\end{cases}
\end{equation}
Let us study this last equation. We note that if $n_i p_i p_{i+1} \le (1-\eps)/2$ for all $i \in \N$ and for some $\eps>0$ then
there is a unique solution of  equation~\eqref{eq:partsyst3}, that is $z(i)=1$ for all $i=0, \ldots, k$.
Indeed  equation~\eqref{eq:partsyst3} represents the system $\widetilde G(z) \le z$ for an irreducible BRW on $\{0,1, \ldots, k\}$
where
\[
 \widetilde \mu_i=
\begin{cases}
  p_0 \delta_{n_0 \ident_{\{1\}}} +(1-p_0)\delta_{\bf 0} & \text{if } i=0\\
  p_i \delta_{n_i \ident_{\{i+1\}}+\ident_{\{i-1\}}} +(1-p_i)\delta_{\bf 0} & \text{if } i=1, \ldots, k-1\\
  p_k \delta_{\ident_{\{k-1\}}} +(1-p_k)\delta_{\bf 0} & \text{if } i=k.\\
 \end{cases}
\]
Indeed, since the graph is finite and connected,
according to Theorem~\ref{th:equiv1}(2)~and~(5) there exists a solution $z \not = \mathbf{1}$ of $\widetilde G(z) \le z$
if and only if $\liminf_{n\to\infty} \sqrt[n]{\sum_j \widetilde m^{(n)}_{ij}}>1$ for some ($\Longleftrightarrow$~for all) $i \in \{0,1, \ldots, k\}$;
but, again since the graph is finite, the previous conditions are equivalent to
$\limsup_{n\to\infty} \sqrt[n]{\widetilde m^{(n)}_{ii}}>1$ for some ($\Longleftrightarrow$~for all)
$i \in \{0,1, \ldots, k\}$. Elementary computations show that
\[
\widetilde m^{(n)}_{ii} \le
\begin{cases}
 \frac1{n+1}\binom{n+1}{n/2} \big (\frac{1-\eps}{2} \big )^{n} \le
\binom{n}{n/2} \big (\frac{1-\eps}{2} \big )^{n}
& \text{if $n$ is even} \\
0 & \text{if $n$ is odd}
\end{cases}
\]
(remember that $\widetilde m_{i \, i+1} \widetilde m_{i+1 \, i}=p_{i}n_{i}p_{i+1}<(1-\eps)/2$)
which implies $\limsup_{n\to\infty} \sqrt[n]{\widetilde m^{(n)}_{ii}}\le 1-\eps$.
This proves that the unique solution of equation~\eqref{eq:partsyst3} is $z(i)=1$ for all $i=0, \ldots, k$.

As before, the trick to prove our goal is to choose the sequences $\{p_i\}_{i\in \N}$ and $\{n_i\}_{i\in \N}$ such that
$p_i \to 0$ fast enough and $p_i n_i=2$ for all $i \in \N$.
Note that if $p_i=2/n_i <(1-\eps)/4$ for all $i \in \N$ then $p_{i+1}p_{i}n_{i}<(1-\eps)/2$.

If $k=1$ then we can choose $n_1$ such that $z(i)> 1/2$ for all $i \le 1$. Indeed if $n_1 \to \infty$ then
$p_1 \to 0$ and both $z(1), z(0) \to 1$.


Suppose we fixed $n_0, \ldots, n_k$ such that any solution of equation~\eqref{eq:partsyst2} satisfies
$z(i) \ge k/(k+1)$ for all $i \le k$ and such that $p_i <(1-\eps)/4$ for all $i=0, \ldots, k$.
%
If $n_{k+1} \to \infty$ then $z(k+1) \to 1$ hence any solution of equation~\eqref{eq:partsyst2}
must converge as before to a solution of equation~\eqref{eq:partsyst3}.
Hence $n_{k+1} \to \infty$ implies $z(i) \to 1$ for all $i \le k+1$ and we can choose
$n_{k+1}$ such that $z(i) \ge (k+1)/(k+2)$ for all $i \le k+1$. This yields the conclusion.

Finally we note that if the BRW is given by
$\mu_i=1/2 \, \delta_{4 \ident_{\{i+1\}}}+p_i \delta_{\ident_{\{i-1\}}} +(1/2-p_i)\delta_{\bf 0}$ (for all $i\ge 1$) and
$\mu_0=1/2 \delta_{4 \ident_{\{1\}}} +1/2 \delta_{\bf 0}$ (where $p_i$ is the same as before) then
it survives globally, hence, even for irreducible BRWs, global survival does not depend only
on the first-moment matrix $M$ and
 $\liminf_{n\to\infty} \sqrt[n]{\sum_y m^{(n)}_{x_0y}}>1$
does not imply, in general, global survival.
\end{exmp}

Another possible question arises from Theorem~\ref{th:equiv1}: is it true that $\sum_{y \in X} m_{xy}<1$
for all $x \in X$ implies global extinction? According to the following example
(see also \cite[Example 1]{cf:BZ2}), the answer is negative.

\begin{exmp}\label{exm:4.5}
As before, we start by giving an example which is not irreducible, later on we modify the
process in order to obtain an irreducible BRW.

Let $X=\N$, $\{p_n\}_{n\in\N}$ be a sequence in $(0,1]$ and suppose that a particle at $n$
has one child at 
$n+1$ with
probability $p_n$ and no children with probability $1-p_n$
(this is the reducible process of the previous example with $n_i=1$
for all $i \in \N$).
The generating function of this process is
$\widetilde G(z|n)=1-p_n + p_n z(n+1) $. 
Again this BRW can be identified with a time-inhomogeneous branching process which
has a probability of extinction, starting with one particle at $n$, equal to
$z(n)=1-\prod_{i=n}^{\infty} p_i$; hence it survives wpp,
if and only if $\sum_{i=1}^\infty (1-p_i) < +\infty$.
It is straightforward to check that $z$ is a solution of $G(z)=z$.

This process is stochastically dominated by the irreducible BRW where
each particle at  $n\ge1$ has one child at $n+1$ with
probability $p_n$, one child at $n-1$ with probability $(1-p_n)/2$
(if $n=0$ then it has one child of type $0$ with probability $(1-p_0)/2$)
and no children at all with probability $(1-p_n)/2$.
The generating function $G$ can be explicitly computed
\[
G(z|n)=
\begin{cases}
\frac{1-p_n}{2} + \frac{1-p_n}{2} z(n-1)+ p_n z(n+1) & n \ge 1 \\
\frac{1-p_0}{2} + \frac{1-p_0}{2} z(0) + p_0 z(1) & n=0.\\
\end{cases}
\]

By coupling this process with the previous one or, simply, by applying Theorem~\ref{th:equiv1}(2)
($z(n)=1-\prod_{i=n}^{\infty} p_i$ is a solution of $G(z) \le z$) one can prove that
$\sum_{i=1}^\infty (1-p_i) < +\infty$ implies global survival. Note that here
$\sum_{j \in \N} m_{ij} = (1+p_i)/2 <1$; clearly, $\liminf_{n \to \infty} \sqrt[n]{\sum_{j \in \N} m^{(n)}_{ij}}=1$.
\end{exmp}

Analogous examples could be constructed for continuous time BRWs as well.
For instance, an example of a continuous-time BRW which survives globally at the global
critical point $\lambda=\lambda_w$ can be found in \cite[Example 3]{cf:BZ2}.

\subsection{Local survival and strong local survival}
\label{subsec:stronglocal}

We know that $\bar q$ is the smallest fixed point of $G(z)$ in $[0,1]^X$,
where $\bar q(x)$ is the probability of global extinction
starting from $x$. Moreover, if a sequence
$\{z_n\}_{n \in \mathbb N}$, defined recursively by
$z_{n+1}=G(z_n)$, has a limit $z$,  then $z=G(z)$ and, if the
sequence is non decreasing, then $z = \bar q$ if and only if $z_0
\le \bar q$.

Define $q_n(x,A)$ as the probability of extinction before
generation $n+1$ in $A$ starting with one particle at $x$. It is
clear that $\{q_n(x,A)\}_{n \in \N}$ is a nondecreasing sequence satisfying
\[
\begin{cases}
 q_{n}(\cdot,A)=G(q_{n-1}(\cdot, A)),& \quad \forall n \ge 1\\
 q_0(x,A)=0, &\quad \forall x \in A,
\end{cases}
\]
hence there is a limit $q(x,A)=\lim_{n \to \infty} q_n(x, A) \in
[0,1]^X$ which is the probability of local extinction in $A$
starting with one particle at $x$. Clearly $q(\cdot,A)=G(q(\cdot,
A))$, hence these probabilities are
fixed points of $G$. It is easy to show that $A \subseteq B$ implies $q(\cdot,A)
\ge q(\cdot,B)$; in particular $q(\cdot, \emptyset)= \mathbf 1$,
and $q(\cdot, X)=\bar q$. The probability $q(\cdot, \{y\})$ of local extinction at $y$
starting from $x$ is denoted simply by $q(\cdot, y)$.
In general $q(\cdot,y)
\ge \bar q$ for all $y \in X$ and $q(\cdot,y)= \bar q$
if and only if $q_0(\cdot,y) \le \bar q$. 
If for some $y \in X$ we have $q(\cdot,y)= \bar q$ then the global survival starting from $x$ implies
the strong local survival at $y$ starting from $x$.

In the special case when the set $\mathcal A$ of fixed points for $G$ is reduced to the minimal one
$\{\bar q, \mathbf 1\}$, 
then either there is no local survival at $y$ starting from
every fixed vertex $x$ (that is, $q(\cdot,y)=\mathbf 1$)
or the global survival at $x$ implies the strong local survival
at $y$ (that is, $q(\cdot,y)=\bar q$). Clearly, since $\bar q$ is the smallest fixed point
of $G$ then $\mathcal A \subseteq \prod_{x \in X} [\bar q(x),1]$.
Thus, it would be important to find conditions for either $\mathcal A=\{\bar q, \mathbf 1\}$
or, when it is possible, for $q(\cdot,y)= \bar q$ for all $y \in X$.
%
Discussing such conditions in details goes beyond the purpose of this paper, but we want to give one
result about the strong local survival of a class of regular discrete-time BRWs to give an idea
of what can be done.

\begin{defn}\label{def:invariantmu}
Let $\gamma:X \to X$ be an injective map. 
We say that $\mu=\{\mu_x\}_{x \in X}$ is $\gamma$-invariant if
for all $x,y \in X$ and $f \in S_X$ we have
$\mu_x(f)=\mu_{\gamma(x)}(f \circ \gamma^{-1})$.
Moreover $(X,\mu)$ is \textit{transitive} if and only if there exists
$x_0 \in X$ such that for all $x \in X$ there exists a bijective
map $\gamma:X\to X$ satisfying $\gamma(x_0)=x$ and $\mu$ is $\gamma$-invariant.
\end{defn}

\begin{pro}
Let $(X, \mu)$ be a discrete-time  BRW
with an irreducible matrix $M$. Then there is local survival at some $y \in X$ starting from some $x\in X$
if and only if there is local survival at all $y \in X$ starting from every fixed $x \in X$. Moreover, if the BRW is transitive
then local survival
implies strong local survival.
\end{pro}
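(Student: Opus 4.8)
\emph{Part one (irreducible $M$).} ``All $\Rightarrow$ some'' is trivial, so assume there is local survival at $y$ starting from $x$. Since $M$ is irreducible, $y\to x$, hence $m^{(k)}_{yx}>0$ for some $k$; as $\E^{\delta_y}\eta_k(x)=m^{(k)}_{yx}>0$, starting from $\delta_y$ there is positive probability that $\eta_k(x)\ge 1$, and the sub-process of descendants of one such particle is distributed as a BRW from $\delta_x$, which by hypothesis survives locally at $y$ with positive probability. Hence there is local survival at $y$ starting from $y$, i.e.\ $\limsup_n\sqrt[n]{m^{(n)}_{yy}}>1$ by Theorem~\ref{th:equiv1}(1). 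Irreducibility gives $a,b$ with $m^{(a)}_{y'y},m^{(b)}_{yy'}>0$, so $m^{(a+n+b)}_{y'y'}\ge m^{(a)}_{y'y}m^{(n)}_{yy}m^{(b)}_{yy'}$ and thus $\limsup_n\sqrt[n]{m^{(n)}_{y'y'}}\ge\limsup_n\sqrt[n]{m^{(n)}_{yy}}>1$ for every $y'$; by Theorem~\ref{th:equiv1}(1) again there is local survival at $y'$ starting from $y'$, and since $x'\to y'$ the same downstream-shift argument yields local survival at $y'$ starting from any $x'$.

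\emph{Part two, reduction.} Strong local survival at $y$ starting from every vertex is, by definition, the equality $q(\cdot,y)=\bar q$. By the criterion recalled in Section~\ref{subsec:stronglocal} ($q(\cdot,y)=\bar q$ if and only if $q_0(\cdot,y)\le\bar q$, with $q_0(w,y)$ the probability that $y$ is never occupied under $\pr^{\delta_w}$), and since $\bar q(w)=\pr^{\delta_w}(\text{global extinction})$, this is equivalent to
\[
(\star)\qquad \pr^{\delta_w}\bigl(y\text{ is never occupied and there is global survival}\bigr)=0\qquad\text{for every }w .
\]
So it suffices to prove $(\star)$ for a transitive, irreducible BRW with local survival.

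\emph{Part two, proof of $(\star)$.} The heart of the matter is a \emph{uniform} lower bound: $\pr^{\delta_v}(\eta_n(z)>0\text{ for some }n)\ge p_0>0$ for all $v,z\in X$, with $p_0$ independent of $v$ and $z$. To obtain it, note that by Part one there is local survival at $v$ starting from $v$ for every $v$, and by transitivity $p_0:=\pr^{\delta_v}(\limsup_n\eta_n(v)>0)$ is the same positive number for all $v$. On $\{\limsup_n\eta_n(v)>0\}$ the vertex $v$ is occupied at infinitely many times; fixing a path $v=v_0,\dots,v_L=z$ in $(X,E_\mu)$ (irreducibility) and applying the branching structure at successive, $L$-separated occupation times of $v$ — each time the fresh subtree sends a lineage along this path to $z$ with conditional probability at least $q^\ast>0$ — a conditional Borel--Cantelli argument shows that on this event $z$ is a.s.\ eventually (in fact infinitely often) occupied; hence $\pr^{\delta_v}(z\text{ never occupied})\le 1-p_0$ for all $v$. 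Now let $\mathcal F_n$ be the natural filtration. On the event $A_n\in\mathcal F_n$ that $z$ has not been occupied up to time $n$ one has $\pr^{\delta_w}(z\text{ never occupied}\mid\mathcal F_n)=\prod_{p}\pr^{\delta_{\pi(p)}}(z\text{ never occupied})\le(1-p_0)^{N_n}$, the product running over the particles $p$ alive at time $n$ (all positioned at $\pi(p)\neq z$), with $N_n=\sum_w\eta_n(w)$; off $A_n$ this conditional probability is $0$. By L\'evy's $0$--$1$ law this conditional probability converges a.s.\ to $\mathbf 1_{\{z\text{ never occupied}\}}$. On $\{z\text{ never occupied}\}\cap\{\text{global survival}\}$ the limit equals $1$; but the total population of a transitive BRW is a Galton--Watson process (the reproduction law is the same at every site), supercritical by global survival and Assumption~\ref{assump:1}, so $N_n\to\infty$ on survival and $(1-p_0)^{N_n}\to 0$ — contradiction unless that event is null. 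This is $(\star)$, whence $q(\cdot,y)=\bar q$ and there is strong local survival at every $y$ from every vertex.

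\emph{Main obstacle.} The delicate step is the uniform bound $\pr^{\delta_v}(z\text{ eventually occupied})\ge p_0$: it forces transitivity (to make $p_0$ independent of the base point), irreducibility (so that $v\to z$) and, crucially, local survival of the BRW (so that the colony launched from $v$ does not die out before it has had a chance to reach $z$) to work together, via the branching structure in the Borel--Cantelli step. None of these may be dropped: $(\star)$ already fails for a transitive BRW in the transient (no local survival) regime on a regular tree, which shows that Theorem~\ref{th:equiv1}(1) must enter in an essential way and that no purely combinatorial or first-moment argument can suffice.
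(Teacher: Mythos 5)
Your argument is correct, and while Part one is the same standard argument the paper relies on (Theorem~\ref{th:equiv1}(1) plus supermultiplicativity and the ``shift along a path'' step), your proof of the strong local survival part takes a genuinely different route. The paper's proof is algebraic and very short: by transitivity and irreducibility both $\bar q$ and $q(\cdot,y)$ are constant vectors, say $\alpha\mathbf 1$ and $\beta\mathbf 1$, and both constants are fixed points of the one--dimensional generating function $h(t)=G(t\mathbf 1;x)=\sum_n\rho(n)t^n$, which by Assumption~\ref{assump:1} is strictly convex and therefore has at most one fixed point in $[0,1)$; local survival gives $\beta<1$, forcing $\alpha=\beta$. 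You instead prove the inequality $q_0(\cdot,y)\le\bar q$ probabilistically: a uniform lower bound $p_0>0$ on $\pr^{\delta_v}(z\text{ ever occupied})$ (transitivity makes the local-survival probability at the starting vertex uniform, irreducibility plus a conditional Borel--Cantelli argument at spaced occupation times transfers it to any target $z$), then the branching property, L\'evy's $0$--$1$ law and the extinction-or-explosion dichotomy for the Galton--Watson process of the total population (non-singular by Assumption~\ref{assump:1}, and here transitivity is used again to make the offspring law site-independent) show that $\{y\text{ never occupied}\}\cap\{\text{global survival}\}$ is null; the paper's criterion from Section~\ref{subsec:stronglocal} then upgrades this to $q(\cdot,y)=\bar q$. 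What each approach buys: the paper's convexity argument is shorter and stays entirely inside the fixed-point machinery for $G$, but it hinges on $\bar q$ and $q(\cdot,y)$ being constant, hence on full transitivity; your argument is longer and compresses one step (the conditional Borel--Cantelli/strong Markov detail at the occupation times should be spelled out, though it is standard), but it only needs a uniform hitting bound and a total-population dichotomy, so it is more robust and would extend, for instance, to quasi-transitive BRWs where the constant-vector trick is unavailable.
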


\begin{proof}
Since the BRW is transitive, then the probability $\bar q(x)$ of global extinction
does not depend on the starting point $x$
and, analogously, the probability $q(x,x)$ of local extinction at $x$ starting from $x$ is independent of $x \in X$.
Moreover the matrix $M$ is irreducible then 
for all $x,y \in X$ we have $q(x,x)=q(x,y)$. 
This,
along with transitivity, implies
that there are $\alpha, \beta \in [0,1]$ such that $\alpha \le \beta$ and $\bar q = \alpha \mathbf 1$ and
$q(\cdot, y)= \beta \mathbf 1$ for all
$y \in X$. Note that $h(t):= G( t \mathbf 1;x)=\sum_{n \in \N} \rho_x(n) t^n$
does not depend on $x$; moreover
$h(\alpha)=\alpha$, $h(\beta)=\beta$ and $h(1)=1$.
If there is local survival then
$\beta<1$ and this, along with Assumption~\ref{assump:1}, implies that $h$ is strictly convex,
thus
$\alpha=\beta$.
This implies $q(\cdot, y)=\bar q$ for all $y \in X$.
\end{proof}
\noindent For a similar result concerning the strong survival of a BRW on a random environment on $\Z$ see~\cite{cf:GMPV09}.

\begin{rem}\label{rem:regularity}
We note that if a continuous-time BRW is $\gamma$-invariant according
to \cite[Section 5]{cf:BZ3} then the discrete-time counterpart
is $\gamma$-invariant.
Hence, the previous result can be helpful to study the regularity of the extinction probabilities $q_\lambda(x,y)$ and
$\bar q_\lambda$, as functions of $\lambda$, of a discrete-time counterpart of a continuous-time BRW
(see equation~\eqref{eq:counterpart}). They are both nonincreasing functions;
for instance if $k(x)=\sum_{y \in X} k_{xy}=\bar k$ 
then
$\bar q_\lambda(x)=\min (1/\lambda \bar k,1)$ for all $x \in X$.

Remember the explicit analytic form of $G$ given by equation~\eqref{eq:Gcontinuous}:
since $M=\lambda K$ there is a dependence of $G=G_\lambda$ on $\lambda$; it is straightforward to show that
\[
\sup_{x \in X} |G_\lambda(z;x)-G_{\lambda^\prime}(z;x)| \le |\lambda-\lambda^\prime| \sup_{x \in X}\sum_{y \in X} k_{xy}.
\]
Moreover $\lambda \ge \lambda^\prime$ implies $G_\lambda(z)\le G_{\lambda^\prime}(z)$ for all $z \in [0,1]^X$.
Using these properties, along with the fact that $\bar q_\lambda$ is the smallest solution of $G_\lambda(z) \le z$
(see \cite{cf:BZ2} for details) it is easy to prove that $\bar q_\lambda \uparrow \bar q_{\lambda^\prime}$ as
$\lambda \downarrow \lambda^\prime$.
In the irreducible case, $\bar q_\lambda= \mathbf 1$ for all $\lambda < \lambda_w$ and \cite[Example 3]{cf:BZ2}
shows that it may happen that $\bar q_{\lambda_w} < \mathbf 1$ which implies that $\bar q_\lambda$ can be discontinuous from
the left.

As for $q_\lambda(x,y)$, clearly in the irreducible case $q_\lambda(\cdot,y)=\mathbf 1$ for all $\lambda \le \lambda_s$.
On the other hand in the irreducible and transitive case we just proved that $\bar q_\lambda = q_\lambda(\cdot, y)$ for
all $\lambda > \lambda_s$ hence if $\lambda_s > \lambda_w$ in general $\bar q_{\lambda_s}< \mathbf 1$, thus
$\lim_{\lambda \downarrow \lambda_s} q_\lambda(\cdot, y)= \bar q_{\lambda_s} < \mathbf 1 =  q_{\lambda_s}(\cdot, y)$.
Hence
$q_\lambda(\cdot, y)$ can be discontinuous from the right.
\end{rem}

\section{Spatial approximation}
\label{sec:spatial}

\subsection{Generalization of a Theorem of Sarymshakov--Seneta}\label{susec:seneta}

Given a matrix $M=(m_{xy})_{x,y \in X}$, 
recall the usual classification of indices of a matrix as described in \cite[Chapter 1]{cf:Sen}.
For any index $x$ we denote by $[x]$ its \textit{class}, that is, the set of indices
which communicate with $x$.
We define the convergence parameters $R(x,y):=\left (\limsup_{n \in \N} \sqrt[n]{m^{(n)}_{xy}} \right )^{-1}$
and $R:= \inf_{x,y \in X} R(x,y)$;
it is well known that $\limsup_{n \in \N} \sqrt[n]{m^{(n)}_{xy}}=\limsup_{n \in \N} \sqrt[n]{m^{(n)}_{x_1y_1}}$ if
$[x]=[x_1]$ and $[y]=[y_1]$; 
in particular $R(x,y)$ is independent of $x,y$ if the matrix is irreducible.

Let $\{X_n\}_{n \in \N}$  be a 
sequence of subsets of $X$ 
and denote by $_nR$ the convergence parameter of $M_n=(m_{xy})_{x,y \in X_n}$;
clearly, if the sequence $\{X_n\}_{n \in \N}$ is nondecreasing, we have that $_nR \geq {_{n+1}R}$.
The following theorem  generalizes \cite[Theorem 6.8]{cf:Sen}
(note that the submatrices $\{M_n\}_{n \in \N}$ are not necessarily irreducible); it
is the key to prove our main
result about spatial approximation (Theorem~\ref{th:spatial}).

\begin{teo}\label{th:genseneta}
 Let $\{X_n\}_{n \in \N}$  be a general sequence of subsets of $X$ such that
$\liminf_{n \to \infty} X_n =X$. Then for all $x_0 \in X$ we have $_nR(x_0,x_0) \rightarrow R(x_0,x_0)$.
Moreover if $M$ is irreducible and $M_n=(m_{xy})_{x,y \in X_n}$ then $_nR \rightarrow R$ as $n \to \infty$ and, in particular,
for all $x_0 \in X$ we have $_nR(x_0,x_0) \rightarrow R$.
\end{teo}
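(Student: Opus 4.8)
The plan is to reduce the statement to the special case where the submatrices exhaust $X$ monotonically, and then to identify the convergence parameter $_nR(x_0,x_0)$ with a radius of convergence of the generating function $\Phi(x_0,x_0|\cdot)$ computed relative to $X_n$. First I would observe that $\liminf_{n\to\infty}X_n=X$ means that every $x\in X$ belongs to $X_n$ for all $n$ large enough; setting $X_n':=\bigcup_{k\le n}X_k$ (or, even better, replacing $X_n$ by $\bigcap_{k\ge n}X_k$, which is nondecreasing and still exhausts $X$) we get a nondecreasing sequence sandwiched between two tails of $\{X_n\}$, and since $_nR(x_0,x_0)$ is monotone under inclusion of the index set, it suffices to prove the claim for nondecreasing sequences exhausting $X$. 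For such a sequence, $_nR(x_0,x_0)\ge{_{n+1}R(x_0,x_0)}\ge R(x_0,x_0)$, so the limit $L:=\lim_n{_nR(x_0,x_0)}\ge R(x_0,x_0)$ exists, and the whole content is the reverse inequality $L\le R(x_0,x_0)$.

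The key step is the generating-function description. For the submatrix $M_n$ restricted to $X_n\ni x_0$, let $\Phi_n(x_0,x_0|\lambda):=\sum_{k\ge 1}\varphi^{(k),n}_{x_0x_0}\lambda^k$, where $\varphi^{(k),n}_{x_0x_0}$ counts the weighted paths of length $k$ from $x_0$ to $x_0$ staying inside $X_n$ and avoiding $x_0$ in between. By equation~\eqref{eq:genfun1} applied inside $X_n$ one has $_nR(x_0,x_0)=\max\{\lambda\in\R:\Phi_n(x_0,x_0|\lambda)\le 1\}$, and similarly $R(x_0,x_0)=\max\{\lambda\in\R:\Phi(x_0,x_0|\lambda)\le 1\}$. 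Now the crucial monotone-convergence fact: since $X_n\uparrow X$, every finite path from $x_0$ to $x_0$ avoiding $x_0$ in between eventually lies in $X_n$, so $\varphi^{(k),n}_{x_0x_0}\uparrow\varphi^{(k)}_{x_0x_0}$ as $n\to\infty$ for each fixed $k$; all terms being nonnegative, $\Phi_n(x_0,x_0|\lambda)\uparrow\Phi(x_0,x_0|\lambda)$ for every $\lambda>0$ by monotone convergence. Fix $\lambda<R(x_0,x_0)$, i.e.\ with $\Phi(x_0,x_0|\lambda)\le 1$ (using that on the interval $[0,R(x_0,x_0)]$ the function $\Phi(x_0,x_0|\cdot)$ stays $\le 1$); actually I would first take $\lambda<R(x_0,x_0)$ strictly, so $\Phi(x_0,x_0|\lambda)<1$ (strict, since $\Phi(x_0,x_0|\cdot)$ is strictly increasing wherever it is positive and $\Phi(x_0,x_0|R(x_0,x_0))\le 1$ forces strict inequality just below). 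Then $\Phi_n(x_0,x_0|\lambda)\le\Phi(x_0,x_0|\lambda)<1$ for all $n$, hence $\lambda\le{_nR(x_0,x_0)}$ for all $n$, hence $\lambda\le L$. Letting $\lambda\uparrow R(x_0,x_0)$ gives $R(x_0,x_0)\le L$, completing the equality $L=R(x_0,x_0)$.

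For the second assertion, assume $M$ irreducible. Then $R(x,y)=R$ is independent of $x,y$, and in particular $R(x_0,x_0)=R$, so the first part already gives $_nR(x_0,x_0)\to R$. To conclude $_nR\to R$, note $_nR=\inf_{x,y\in X_n}{_nR(x,y)}\le{_nR(x_0,x_0)}$, so $\limsup_n{_nR}\le R$; and the monotonicity $_nR\ge{_{n+1}R}$ together with $_nR\ge R$ (every restricted return-generating function is dominated by the global one, so $_nR\ge R$) gives $\liminf_n{_nR}\ge R$. Hence $_nR\to R$. The main obstacle is the careful handling of the boundary value: one must be sure that for $\lambda$ strictly below $R(x_0,x_0)$ one indeed has $\Phi(x_0,x_0|\lambda)<1$ (so that the dominated submatrix inequality $\Phi_n\le\Phi$ transfers to the parameters), which uses that $\Phi(x_0,x_0|\cdot)$ is a nonnegative power series with positive coefficients (valid once we know $x_0\rightleftharpoons x_0$ in the relevant class — which is automatic under irreducibility and, for the first statement, can be arranged since if $x_0$ communicates with no cycle then $R(x_0,x_0)=\infty$ trivially and the claim is vacuous or handled separately). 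A secondary point to verify is the identity $_nR(x_0,x_0)=\max\{\lambda:\Phi_n(x_0,x_0|\lambda)\le 1\}$, which is exactly equation~\eqref{eq:genfun1} applied to the finite-or-countable index set $X_n$ with its own first-moment submatrix.
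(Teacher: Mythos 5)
Your overall strategy --- reduce to a nondecreasing exhaustion via $\bigcap_{k\ge n}X_k$ and a squeeze, then work with the first-return generating functions $\Phi_n(x_0,x_0|\cdot)\uparrow\Phi(x_0,x_0|\cdot)$ instead of quoting Sarymshakov--Seneta as the paper does --- is sound and in fact more self-contained than the paper's argument. But the central step, as written, proves only the trivial inequality. For $X_n\uparrow X$ you correctly observe that ${_nR(x_0,x_0)}$ is nonincreasing and bounded below by $R(x_0,x_0)$, so $L:=\lim_n{_nR(x_0,x_0)}\ge R(x_0,x_0)$ holds automatically, and you state yourself that ``the whole content is the reverse inequality $L\le R(x_0,x_0)$.'' Yet your key paragraph then fixes $\lambda<R(x_0,x_0)$, deduces $\Phi_n(x_0,x_0|\lambda)\le\Phi(x_0,x_0|\lambda)<1$, concludes $\lambda\le{_nR(x_0,x_0)}$ for all $n$, and lets $\lambda\uparrow R(x_0,x_0)$ to obtain $R(x_0,x_0)\le L$ --- exactly the inequality you already had for free from monotonicity. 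The hard direction is never proved, so the argument is circular at its decisive point.

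The gap is repairable with the tool you already built. To get $L\le R(x_0,x_0)$ (assuming $R(x_0,x_0)<+\infty$; otherwise there is nothing to prove), take $\lambda>R(x_0,x_0)$. Since $R(x_0,x_0)=\max\{\lambda'\in\R:\Phi(x_0,x_0|\lambda')\le1\}$ by equation~\eqref{eq:genfun1}, we have $\Phi(x_0,x_0|\lambda)>1$ (possibly $+\infty$). Your monotone convergence $\Phi_n(x_0,x_0|\lambda)\uparrow\Phi(x_0,x_0|\lambda)$ then gives an $N$ with $\Phi_n(x_0,x_0|\lambda)>1$ for all $n\ge N$, whence ${_nR(x_0,x_0)}<\lambda$ for $n\ge N$ and so $L\le\lambda$; letting $\lambda\downarrow R(x_0,x_0)$ finishes. (You should also record that the identity ${_nR(x_0,x_0)}=\max\{\lambda:\Phi_n(x_0,x_0|\lambda)\le1\}$ is just equation~\eqref{eq:genfun1} applied to the submatrix, as you note at the end.) For comparison, the paper proceeds differently: it restricts to the communication class $[x_0]$, passes to the irreducible classes $J_n$ of $x_0$ inside each $X_n$, invokes Seneta's Theorem~6.8 for those, and sandwiches $R\le{_nR}\le{_nR(x_0,x_0)}\le{_n\widetilde R}$; your route, once the direction of the inequality is repaired, avoids that citation entirely and handles the reducible case with no extra work.
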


\begin{proof}
Suppose that $M$ is irreducible.
We start by showing that if $\{X_n\}_{n \in \N}$  is a nondecreasing
sequence of subsets of $\N$ such that $\bigcup_{n \in \N} X_n = X$
then $_nR \downarrow R$ as $n \to \infty$ and $_nR(x_0,x_0) \rightarrow R$
for all $x_0 \in X$.
If $M_n$ are all irreducible then the claim follows easily from \cite[Theorem 6.8]{cf:Sen}.
In the general case, fix an index $x_0 \in X$ and consider the sequence of sets
$\{J_n\}_{n \in \N}$ where $J_n$ is the class of $x_0$ in $X_n$.
Given any index $x$, we have that $x \in J_n$ eventually as $n\to\infty$; indeed
if $A$ is the set of vertices in a path connecting $x_0$ to $x$ and back
(which exists since $M$ is irreducible) then eventually $A \subseteq X_n$ which
implies $A \subseteq J_n$ thus $\bigcup_n J_n=X$. Let us call $_n\widetilde R$ the convergence parameter
of $\widetilde M_n=(m_{xy})_{x,y \in J_n}$. Since $\widetilde M_n$ is an irreducible
submatrix of $M_n$
then, according to \cite[Theorem 6.8]{cf:Sen}, $_n\widetilde R \downarrow R$. On the other hand
$R \leq {_nR} \leq {_nR(x_0,x_0)} \leq {_n\widetilde R(x_0,x_0)} = {_n\widetilde R}$ which yields the conclusion
for a nondecreasing sequence of subsets.

If $\{X_n\}_{n \in \N}$ is not monotone, then consider a nondecreasing sequence
$\{X^\prime_n\}_{n \in \N}$ of finite subsets of $X$ such that $\bigcup_{n \in \N} X^\prime_n = X$.
For any $n$ there is $r_n$ such that for all $r \ge r_n$ we have $X_{r} \supseteq X^\prime_n$.
Clearly, for all $r \ge r_n$,
\[
R \leq {_rR} \leq {_{r}R(x_0,x_0)} \leq {_nR^\prime(x_0,x_0)} \downarrow R
\]
as $n \to \infty$ (where $_nR^\prime$ is
the convergence parameter of $M^\prime_n=(m_{xy})_{x,y \in X^\prime_n}$).

Finally, if $M$ is not irreducible, consider $\widetilde{M}=(m_{xy})_{x,y \in [x_0]}$
and $\widetilde{M}_n=(m_{xy})_{x,y \in [x_0] \cap X_n}$. It is easy to show that
$\widetilde{R}(x_0, x_0) = R(x_0,x_0)$ and $_n\widetilde{R}(x_0, x_0)={_nR(x_0,x_0)}$
for all $n \in \N$. Since $\widetilde M$ is irreducible, the first part of the proof yields the conclusion.
\end{proof}

Note that in the previous theorem the subsets $\{X_n\}_{n \in \N}$ can be chosen
arbitrarily; in particular they may be finite proper subsets.

\subsection{Application to BRWs}\label{subsec:spatialBRW}



Given a sequence of BRWs $\{(X_n,\mu_n)\}_{n \in \N}$ such that $\liminf_{n \to \infty} X_n=X$,
we define $m(n)_{xy}:=\sum_{f \in S_{X_n}} f(y) \mu_{n,x}(f)$ and
the corresponding sequence of submatrices $\{M_n\}_{n \in \N}$ of $M$ where $M_n=(m(n)_{xy})_{x,y \in X_n}$.
The main goal of this section is to 
investigate if the survival  of $(X,\mu)$ can guarantee
the survival of $(X_n,\mu_n)$ for all sufficiently large $n$.
The following theorem is the main result of this section; note that
for all $x,y \in X$, $m(n)_{xy}$ is well defined eventually as $n \to \infty$.
In this result we are not assuming that the BRW is irreducible.

\begin{teo}\label{th:spatial}
Let us 
fix a vertex $x_0 \in X$. If $\liminf_{n \to \infty} X_n=X$ and
$m(n)_{xy} \le m_{xy}$ for all $x,y \in X_n$, $n \in \N$ and
 $m(n)_{xy} \to m_{xy}$ as $n \to \infty$ then
\begin{enumerate}
\item
$(X,\mu)$ dies out locally (resp.~globally) a.s.~starting from $x_0$ $\Longrightarrow$ $(X_n,\mu_n)$ dies out locally (resp.~globally)
a.s~starting from $x_0$  for all $n \in \N$;
\item
$(X,\mu)$ survives locally wpp starting from $x_0$  $\Longrightarrow$ $(X_n,\mu_n)$ survives locally wpp starting from $x_0$ eventually as $n \to\infty$.
\end{enumerate}
\end{teo}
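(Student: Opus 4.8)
The plan is to prove the two implications by quite different means. Statement (1) is the easy direction: under the hypotheses, $(X_n,\mu_n)$ is a stochastically dominated version of $(X,\mu)$ — there are maps $F_x$ with $F_x(f)\le f$ and $\mu_{n,x}=\mu_x\circ F_x^{-1}$, of which $m(n)_{xy}\le m_{xy}$ is the first–moment shadow — so by the coupling of Section~\ref{subsec:1} one realizes $(X,\mu)$ and $(X_n,\mu_n)$ on a common space with $\eta_0\ge\xi_0\Rightarrow\eta_n\ge\xi_n$ for all $n$; hence a.s.\ local (resp.\ global) extinction of $(X,\mu)$ from $x_0$ forces the same for $(X_n,\mu_n)$. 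For the local half one need not even invoke the coupling: since $X_n\subseteq X$ and all entries are nonnegative, $m(n)^{(k)}_{x_0x_0}\le m^{(k)}_{x_0x_0}$, so $\limsup_k\sqrt[k]{m(n)^{(k)}_{x_0x_0}}\le\limsup_k\sqrt[k]{m^{(k)}_{x_0x_0}}\le 1$ and Theorem~\ref{th:equiv1}(1), applied to $(X_n,\mu_n)$, gives a.s.\ local extinction.

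The content of the theorem is (2), which I would obtain by combining Theorem~\ref{th:genseneta} with Theorem~\ref{th:equiv1}(1). By the latter, local survival of $(X,\mu)$ from $x_0$ means $\beta:=\limsup_k\sqrt[k]{m^{(k)}_{x_0x_0}}>1$, i.e.\ $R(x_0,x_0)=1/\beta<1$. The first step is to \emph{localize}. Pick any nondecreasing sequence of finite sets $F_j\uparrow X$ with $x_0\in F_1$; Theorem~\ref{th:genseneta}, applied to the matrix $M$ itself and the sequence $\{F_j\}_j$, yields ${}_jR(x_0,x_0)\to R(x_0,x_0)<1$. Hence there is a fixed finite set $F\ni x_0$ with $\beta_F:=\limsup_k\sqrt[k]{(M|_F)^{(k)}_{x_0x_0}}>1$, where $M|_F:=(m_{xy})_{x,y\in F}$.

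Now fix $\eps\in(0,1)$ small enough that $(1-\eps)\beta_F>1$. As $F$ is finite, $F\times F$ has finitely many pairs, so from $m(n)_{xy}\to m_{xy}$ and $\liminf_n X_n=X$ there is $N$ such that for every $n\ge N$ one has $F\subseteq X_n$ and $m(n)_{xy}\ge(1-\eps)m_{xy}$ for all $x,y\in F$ simultaneously. For such $n$, dropping from $m(n)^{(k)}_{x_0x_0}$ every path-term that leaves $F$ and bounding each remaining factor below,
\[
m(n)^{(k)}_{x_0x_0}\ \ge\ \sum_{x_1,\dots,x_{k-1}\in F}m(n)_{x_0x_1}\cdots m(n)_{x_{k-1}x_0}\ \ge\ (1-\eps)^k\,(M|_F)^{(k)}_{x_0x_0},
\]
so $\limsup_k\sqrt[k]{m(n)^{(k)}_{x_0x_0}}\ge(1-\eps)\beta_F>1$. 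By Theorem~\ref{th:equiv1}(1) applied to $(X_n,\mu_n)$, that BRW survives locally wpp from $x_0$ for all $n\ge N$, which is (2).

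The one genuine obstacle is that $m(n)_{xy}\to m_{xy}$ is only pointwise on the infinite set $X$, so one cannot directly sandwich $M_n$ between $(1-\eps)M$ and $M$ on all of $X_n$; passing to a finite window $F$ first — where Theorem~\ref{th:genseneta} already secures a growth rate above $1$ — is precisely what turns pointwise into uniform convergence, and it is the only nontrivial idea. A harmless caveat: Theorem~\ref{th:equiv1}(1) is stated under Assumption~\ref{assump:1}, a standing assumption satisfied by every BRW here including $(X_n,\mu_n)$; but in any case only the implication ``$\limsup>1\Rightarrow$ survival'' is used, and that one holds for any BRW since the associated embedded Galton--Watson process is then supercritical.
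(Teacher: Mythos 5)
Your proof is correct and follows essentially the same route as the paper: part (1) by the coupling of Section~\ref{subsec:1}, and part (2) by using Theorem~\ref{th:genseneta} to locate a finite window around $x_0$ whose restricted matrix already has growth rate $>1$, then exploiting finiteness of that window to turn the pointwise convergence $m(n)_{xy}\to m_{xy}$ into a uniform lower bound $(1-\eps)m_{xy}$ and concluding via Theorem~\ref{th:equiv1}(1). The only cosmetic difference is that the paper rescales $M$ by $1/(1+\eps)$ before applying the Seneta-type theorem, whereas you apply it to $M$ itself and insert the factor $(1-\eps)^k$ afterwards.
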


\begin{proof}
\begin{enumerate}
\item
It follows by coupling the BRW$(X_n,\mu_n)$ with the subcritical BRW$(X,\mu)$
as described in Section~\ref{subsec:1}.
\item
Let us fix a sequence $\{Y_n\}_{n \in \N}$ of finite subsets of $X$ such that $\liminf_{n \to \infty} Y_n =X$.
By Theorem~\ref{th:equiv1}(1) there exists $\eps>0$ such that $\limsup_{i\to\infty}\sqrt[i]{m^{(i)}_{x_0x_0}}>1+\eps$.
Consider the sequence of submatrices $A_n=(a(n)_{xy})_{x,y \in Y_n}$ where $a(n)_{xy}:=m_{xy}/(1+\eps)$.
Using Theorem~\ref{th:genseneta} 
we have that
\[
 \lim_{n \to \infty} \limsup_{i\to\infty}\sqrt[i]{a(n)^{(i)}_{x_0x_0}} = \limsup_{i\to\infty}\sqrt[i]{m^{(i)}_{x_0x_0}}/(1+\eps) >1,
\]
as $n \to \infty$. Let $\bar n$ such that $\limsup_{i\to\infty}\sqrt[i]{a(\bar n)^{(i)}_{x_0x_0}}>1$.
Moreover since $Y_{\bar n}$ is finite
there exists $n_0$ such that for all $n \ge n_0$ we have $m(n)_{xy} \ge m_{xy}/(1+\eps)=a(\bar n)_{xy}$ for all $x,y \in Y_{\bar n}$, thus
\[
 \limsup_{i\to\infty}\sqrt[i]{m(n)^{(i)}_{x_0x_0}}\ge \limsup_{i\to\infty}\sqrt[i]{a(\bar n)^{(i)}_{x_0x_0}}>1
\]
for all $n \geq n_0$. Theorem~\ref{th:equiv1}(1) yields the conclusion.
\end{enumerate}
\end{proof}

Note that in the language of continuous-time BRWs
(see \cite{cf:BZ} and \cite{cf:BZ2} for details), the claim of the previous theorem
is $\lambda_s((X_n, \mu_n),x_0) \to \lambda_s((X,\mu),x_0)$; hence it is a generalization of \cite[Theorem 3.1]{cf:BZ3}.

Among all the possible choices of the sequence
$\{(X_n,\mu_n)\}_{n \in \N}$ there is one which is
\textit{induced} by $(X,\mu)$ on the subsets $\{X_n\}_{n\in \N}$;
more precisely, one can take $\mu_n(g):=\sum_{f \in S_X:f|_{X_n}=g} \mu_x(f)$
for all $x \in X_n$ and $g \in S_{X_n}$.
Roughly speaking, this choice means that all the reproductions outside $X_n$ are suppressed.
Note that, in this case,
$m(n)_{xy}=m_{xy}$ for all $x,y \in X_n$; the result in this particular case is implicitly
used, for instance, in the proof of \cite[Theorem 2.4]{cf:GMPV09}.

\begin{rem}
Theorem~\ref{th:spatial} deals mainly with local survival. One can wonder what
can be said about global survival. Clearly if the $(X,\mu)$ process survives 
globally and locally then eventually $(X_n,\mu_n)$ survives locally and thus globally.

The question is nontrivial when $(X,\mu)$ survives globally but not locally, which we assume henceforth in this remark.

In this last case,
if $X_n$ is finite for every $n \in \N$ and the graph $(X_n, E_{\mu_n})$ is connected
then there is no distinction between global and local survival for the process $(X_n,\mu_n)$; in
particular $(X_n,\mu_n)$ dies out (locally and globally) a.s.~for all values of $n \in \N$.

On the other hand, the case where $X_n$ is finite for every $n \in \N$ and the graph $(X_n, E_{\mu_n})$ is not connected
is more complicated and can be treated as in \cite[Remark 4.4]{cf:BZ2}.

When $X_n$ is infinite for infinitely many values of $n$,
one cannot always expect to have global survival for sufficiently large values of $n$.
For continuous-time BRWs the counterexample can be constructed using \cite[Remark 3.10]{cf:BZ},
hence a discrete-time counterexample would be the discrete-time counterpart.
Nevertheless a discrete-time counterexample can be constructed directly as follows
(we use the notation introduced in Section~\ref{subsec:discrete}).

Fix an infinite, transitive and connected graph $X$ and consider a reproduction law as
in equation~\eqref{eq:particular1} such that the random walk $\{p(x,y)\}_{x,y \in X}$ has a spectral radius
$\theta <1$. Suppose that $\bar \rho_x=\bar \rho \in (1, \theta^{-1})$ is independent of $x$.
Hence (see equation~\eqref{eq:meanparticular}) according to Theorem~\ref{th:fgraphf} there is global survival
since $\sqrt[n]{\sum_{y \in X} m^{(n)}_{xy}}= \bar \rho >1$.
On the other hand there is no local survival since, according to Theorem~\ref{th:equiv1}(1),
$\limsup_{n \to \infty}  \sqrt[n]{ m^{(n)}_{xx}} = \bar \rho \theta <1$.

Fix a vertex $o \in X$, an infinite self-avoiding path $\gamma=\{x_i\}$ in $X$ starting from $o$ and
define
$X_n:=B(o,n) \cup \gamma$ (where $B(o,n)$ denotes the ball with center $o$ and radius $n$). Finally,
consider the restricted laws $\mu_n$ of $\mu$ to $X_n$ as described in Section~\ref{subsec:1}.

Adapting the arguments of \cite[Remark 3.10]{cf:BZ}, since there is no local survival
in $X_n$ and the ball $B(o,n)$ is finite, one can prove that there is global survival on $X_n$ if and only if
there is global survival on the set $\gamma$.
Suppose now that the ray is chosen in such a way that
\[
 \max\left ( p(x_0,x_1), \sup_{i \ge 1} (p(x_i,x_{i+1})+p(x_i,x_{i-1})) \right )= \alpha <1;
\]
by choosing $\bar \rho \in \left (1, \min(\theta^{-1}, \alpha^{-1}) \right )$ we have, by induction on $n$,
that $\sum_{y \in \gamma} m^{(n)}(x,y) \le \bar \rho^n \alpha^n$ and then, according
to Theorem~\ref{th:equiv1}(4), there is no global survival on $\gamma$. This implies
that there is no global survival on $X_n$ for any $n \in \N$.
An explicit example is given by the homogeneous tree with degree
larger than 3 with the simple random walk.
\end{rem}

A possible application of Theorem~\ref{th:spatial} is based on Definition~\ref{def:invariantmu}.
 Consider 
an injective map $K$ and suppose that $\mu$ is $K$-invariant.
If $(X,E_\mu)$ is connected and there exists $Y \subseteq X$ such that for
all finite subsets $A \subset X$ we have
$X_n:=K^{(n)}(Y) \supseteq A$ (for some $n \in \N$)
then the BRW$(X,\mu)$ survives (locally) wpp if and only if
$(Y,\nu)$ survives (locally) wpp (where $\nu$ is the law induced
by $\mu$ on $Y$). Indeed, note that, if $\mu_n$ is the law induced by $\mu$ on $X_n$,
since $\mu$ is $K$-invariant, the behavior of $(X_n,\mu_n)$
is the same as the behavior of $(Y,\nu)$.
Clearly if  $(Y,\nu)$  survives then $(X,\mu)$ survives.
On the other hand if $(X,\mu)$ survives, according to Theorem~\ref{th:spatial}, there exists
a finite subset $A \subset X$ such that the induced BRW on $A$ survives thus,
for any $n$ such that $X_n \supseteq A$ 
we have that $(X_n, \mu_n)$
survives and this implies the survival of $(Y,\nu)$.

This applies, in particular, when $X=\Z^d$, $\mu_x$ is translation invariant and
$Y$ is a cone, namely $Y=\{y \in \Z^d :\langle y, y_0 \rangle \geq \alpha \|y\|\cdot\|y_0\| \}$ for some fixed nontrivial  $y_0 \in \Z^d$
and $\alpha <1$ (where $\langle \cdot, \cdot \rangle$ and $\|\cdot\|$ represent the
usual scalar product and norm of $\Z^d$ respectively). In this case $K(x):=x-y_0$.
Roughly speaking, this means that if the BRW survives locally on $\Z^d$ it must survive
even when it is restricted to a proper subset $Y$ as long as the collection of
all the ``$K$-translations'' of $Y$ cover the whole space $\Z^d$.


\section{Approximation by truncated BRWs}
\label{sec:truncated}

In this section we want to study the approximation of a BRW $\{\eta_n\}_{n \in \N}$
by means of the sequence of truncated BRWs $\{\{\eta_n^m\}_{n \in \N}\}_{m \in \N}$.
We already know that if the BRW dies out locally (resp.~globally) a.s.~then any truncated BRW dies out locally (resp.~globally) a.s.~(this
can be proved by coupling as explained in Section~\ref{subsec:1}).
On the other hand we would like to be able to prove a result similar to Theorem~\ref{th:spatial}
as $m$ tends to infinity. For continuous-time BRWs this has been done in
\cite{cf:BZ3}; the technique we use here is essentially the same.
From now on the set $X$ is assumed to be countable; indeed, if it is finite
then there is no survival for the truncated BRW $\{\eta_n^m\}_{n \in \N}$
for any $m \in \N$.
Moreover, for technical reasons we suppose that the graph $(X,E_{\mu})$ has finite geometry, that is,
$\sup_{x \in X} \mathrm{deg}(x)<+\infty$. 

 In the following (see Step 3 below) we need to find
a measure $\rho$ which dominates stochastically all the measures $\{\rho_x\}_{x \in X}$
(where $\rho_x$ is the distribution of the number of children of a particle living at $x$,
see Section~\ref{subsec:discrete}). It is
straightforward to see that the existence of such a measure $\rho$ is equivalent to
$\sup_{x \in X} \rho_x([n,+\infty)) \to 0$ as $n \to +\infty$ (that we assume henceforth).
In this case $\rho$ can be chosen according to
\begin{equation}\label{eq:chooserho}
\rho(n)=\sup_{x \in X} \rho_x([n,+\infty))-\sup_{x \in X} \rho_x([n+1,+\infty)).
\end{equation}
Moreover the measure $\rho$ can be chosen with finite first (resp.~$k$-th)
moment if and only if \break
$\sum_{n \ge1} \sup_{x \in X} \rho_x([n,+\infty))<+\infty$
(resp.~$\int_0^\infty \sup_{x \in X} \rho_x([\sqrt[k]{t},+\infty)) \diff t<+\infty$).
%

We assume that the matrix $M$ is irreducible and we denote its
convergence parameter by $R_\mu$. We observe that, using this notation,
according to Theorem~\ref{th:equiv1}(1), local survival
is equivalent to $R_\mu<1$. Remember that, in this case,
$\liminf_{n\to\infty} \sqrt[n]{\sum_y m^{(n)}_{xy}}$ and $\limsup_{n\to\infty} \sqrt[n]{m^{(n)}_{xx}}$
do not depend on the choice of $x,y \in X$.

In the following, we need to define the product of two graphs (basically, these will
be space/time products):
given two graphs $(X,\mathcal E)$, $(Y,\mathcal E^\prime)$ we denote by
$(X,\mathcal E) \times (Y,\mathcal E^\prime)$ the weighted graph with set of vertices
$X \times Y$ and set of edges $\mathcal E=\{((x,y),(x_1,y_1)): (x,x_1) \in\mathcal E,
(y,y_1) \in \mathcal E^\prime\}$.
%

\subsection{The comparison with an oriented percolation}
\label{sec:roadmap}

First of all, remember the coupling between $\{\eta_n\}_{n\in \N}$ and $\{\eta^m_n\}_{n \in \N}$:
the truncated process $\{\eta^m_n\}_{n \in \N}$ (satisfying equation~\eqref{eq:evolBRWm}) can be seen as
the BRW $\{\eta_n\}_{n \in \N}$ (satisfying equation~\eqref{eq:evolBRW}) by removing, at each step, all the births
which cause more than $m$ particles to live on the same site.
As in \cite{cf:BZ3} we need two other coupled processes.
Fix $\widetilde n\in\N$ (which plays the same role as $n_0$ in \cite{cf:BZ3})
and 
let $\{\bar\eta_n\}_{n \in \N}$ be the process obtained from
the BRW $\{\eta_n\}_{n \in \N}$ by removing all $n$-th generation particles
with $n> \widetilde n$, that is
 \begin{equation}\label{eq:comparison}
 \bar \eta_n=
\begin{cases}
\eta_n & n \le \widetilde n\\
0 & n >\widetilde n.
\end{cases}
 \end{equation}
Define $\{\bar\eta^m_n\}_{n \in \N}$ analogously
from $\{\eta^m_n\}_{n \in \N}$. Clearly, the following stochastic inequalities hold
$\eta_n\ge\eta^m_n$
and
$\bar\eta_n\ge\bar\eta^m_n$ for all $n \in \N$.
By construction, the progenies of a given particle
in $\{\bar\eta_n\}_{n \in \N}$ or $\{\bar\eta^m_n\}_{n \in \N}$
lives at a distance from the ancestor not larger than $\widetilde n$.

Our proofs are essentially divided
into four main steps. We report here shortly the essence
of these steps and we refer to \cite[Section 4]{cf:BZ3} for further details.

\begin{step}\label{st:1}
Fix a graph $(I,\mathcal E(I))$ such that the
Bernoulli percolation on $(I,\mathcal E(I))\times\vec\N$
has a critical value $p_c<1$ (where we denote by  $\vec{\N}$ the oriented graph on $\N$, that is, $(i,j)$ is an edge if and only
if $j=i+1$).
\end{step}

The usual trick is to find a copy of the graph
$\Z$ or $\N$ as a subgraph of $I$, since the (oriented) Bernoulli bond percolation on $\Z \times \vec{\N}$ and
$\N \times \vec{\N}$ has two phases. In this paper, the
main choices for $I$ are $\Z$, $\N$ or
$X$.

\begin{step}\label{st:2}
Given a globally (or locally) surviving BRW
and for every $\eps>0$
there exists a collection of disjoint sets $\{A_i\}_{i\in I}$ ($A_i\subset X$ for all $i\in I$), $\bar n>0$,
and $k\in\N \setminus\{0\}$, such that, for all $i \in I$,
\begin{equation}\label{eq:step2}
\pr \Big ( \forall j:(i,j)\in \mathcal E(I),
\sum_{x\in A_j}\eta_{\bar n
}(x)\ge k\Big |\eta_{0}= \eta\Big )
>1-\eps,
\end{equation}
for all $\eta$ such that
$\sum_{x\in A_i}\eta(x)= k$ and $\eta(x)=0$ for all $x\not\in A_i$.
The same holds, for  $\widetilde n \ge \bar n$, for $\{\bar\eta_n\}_{n\in\N}$ in place of $\{\eta_n\}_{n\in\N}$.
\end{step}

In the following sections Step 2 will be established under certain conditions
(and for suitable choices of $(I, \mathcal E(I))$). Basically we have to prove that, for a suitable surviving
BRW, with a probability arbitrarily close to $1$, given enough particles in $A_i$, after a fixed time $\bar n$,
we have at least the same number of particles on every neighboring set $A_j$.

In order to understand Steps 2, 3 and 4,  we describe briefly the percolation that we want to construct.
Consider an edge $((i,l),(j,l+1))$ in $(I,\mathcal E(I))\times\vec\N$:
let it be open if $\{\eta_n^m\}_{n \in \N}$ has at least $k$ individuals in $A_i$ at time $l\bar n$ and
in $A_j$ at time $(l+1)\bar n$. Step 3, which follows from Step 2, guarantees that, if $m$ is sufficiently large,
then all the edges exiting
a fixed vertex are open with a probability arbitrarily close to 1.
Thus the probability of global survival of $\{\eta_n^m\}_{n \in \N}$ is bounded from below by
the probability that there exists an infinite cluster containing $({i_0},0)$
in this percolation on $I\times\vec\N$, and, if $A_{i_0}$ is finite,
the probability of local survival
is bounded from below by the probability that the cluster contains infinitely many points
in $\{({i_0},l):l\in\N\}$ (we suppose to start with $k$ particles in $A_{i_0}$).
Let $\nu_1$ be the associated percolation measure.
Unfortunately this percolation is neither independent nor one-dependent.
In fact the opening procedure of the edges $((i,n),(j,n+1))$
and $((i_1,n),(j_1,n+1))$ may depend respectively on two different
progenies of particles overlapping on a vertex $x_0$. This may cause
dependence since if in $x_0$ there are already $m$ particles then
newborns are not allowed. Step 4
overcomes this problem.

\begin{step}\label{st:3}
Let $\eps$, $\{A_i\}_{i\in I}$, $\bar n$ and $k$ be chosen as in Step \ref{st:2}.
Then for all sufficiently large $m$ we have that,
for all $i \in I$,
\begin{equation}\label{eq:step3}
\pr \Big ( \forall j:(i,j)\in \mathcal E(I),
\sum_{x\in A_j}\eta^m_{\bar n
}(x)\ge k\Big |\eta^m_{0}= \eta\Big )
>1-2\eps,
\end{equation}
for all $\eta$ such that
$\sum_{x\in A_i}\eta(x)= k$,  $\eta(x)=0$ for all $x\not\in A_i$.
The same holds, for $\widetilde n \ge \bar n$, for $\{\bar\eta^m_n\}_{n\in\N}$ in place of $\{\eta^m_n\}_{n\in\N}$.
\end{step}
Step \ref{st:3} follows from Step \ref{st:2}; the proof is a natural
adaptation of the same arguments of \cite[Step 3]{cf:BZ3}.
Indeed let $N_n$ be the total number of particles ever born in the BRW before time $n$
(starting from the configuration $\eta$); it is clear that $N_n$ is a 
process stochastically dominated (the arguments are similar to the ones we used in Section~\ref{subsec:1}) by a 
branching process with offspring law
%
%
%
\[
\rho^\prime(n)
:=
\begin{cases}
0 & n=0\\
\rho(n-1) & n \ge 1
\end{cases}
\]
and initial state $N_0$ (where $\rho$ is given by equation~\eqref{eq:chooserho}).
If $N_0<+\infty$ almost surely then for all $n>0$ we have
$N_n<+\infty$ almost surely; hence for all  $n>0$, $k >0$ and $\eps>0$
there exists $N(n,\eps,k)$
such that, for all $i \in I$,
\[
\pr\Big (N_n\le N(n,\eps,k)\Big |\eta_{0}= \eta\Big )>1-\eps,
\]
for all $\eta$ such that
$\sum_{x\in A_i}\eta(x)= k$, $\eta(x)=0$ for all $x\not\in A_i$.
Define $\widetilde N=N(\bar n,\eps,k)$:
in order to compare with \cite[Step 3]{cf:BZ3}, we observe that
$\widetilde N=N(\bar n,\eps,k)$ plays the same role played
by $\bar n=n(\bar t, \varepsilon)$ in \cite{cf:BZ3}.
The conclusion follows, using elementary probability arguments, as in \cite[Step 3]{cf:BZ3}
by choosing $m \ge \widetilde N$.

\begin{step}\label{st:4}
Given a globally (or locally) surviving BRW,
for every $\eps>0$ and for all sufficiently large $m$,
there exists a  one-dependent oriented percolation
on $I\times\vec\N$ (with probability $1-2\eps$ of opening simultaneously all edges from a vertex
and $2\varepsilon$ of opening no edges)
such that the probability of survival of the BRW$_m$ (starting at time $0$ from a configuration
$\eta$ such that
$\sum_{x\in A_{i_0}}\eta(x)= k$ and $\eta(x)=0$ for all $x\not\in A_{i_0}$)
is larger than the probability
that there exists an infinite cluster containing $({i_0},0)$.
\end{step}

To overcome the dependence of the ``natural'' percolation that we described above,
we adapt \cite[Step 4]{cf:BZ3} to a discrete-time process:
the construction is made by means of the process $\{\bar \eta^m_n\}_{n\in\N}$ by choosing
$m \ge 2 \widetilde N H$ where $\widetilde N$ is the same as in Step 3 and
$H$ is the maximum of the number of paths of length
$\widetilde n$ crossing a vertex (the assumption of bounded geometry that we made on the graph
plays a fundamental
role here). Note that with this choice of $m$ we have that, starting with an initial condition
$\eta_0$ (such that $\sum_{x \in X} \eta_0(x)=k$),
$\eta^m_n=\eta_n$ and $\bar \eta^m_n=\bar \eta_n$ for all $n \le \bar n$ on an event with probability at least
$1-\varepsilon$ (namely, $\{N_{\bar n} \ge N(\bar n,\varepsilon,k)\}$ as defined in Step 3).
Step \ref{st:4} follows then from Step \ref{st:3}.


\medskip

Our next goals are to fix suitable graphs $(I,\mathcal E(I))$ and prove
Step \ref{st:2} for a large class of globally surviving BRWs: then by Steps \ref{st:4} and \ref{st:1},
for all sufficiently large $m$, the corresponding truncated BRW$_m$ survives globally wpp
if $m$ is sufficiently large.
On the other hand, in order to show that, given a \textsl{locally} surviving BRW,
the corresponding truncated BRW$_m$ survives wpp
if $m$ is sufficiently large,
we need to prove Step \ref{st:2} with a choice of at least one $A_i$ finite,
say $A_{i_0}$, and $I$ containing a copy of $\Z$ or $\N$
as a subgraph.
Remember that, in a supercritical Bernoulli bond percolation in
$\Z \times \vec{\N}$ or $\N \times \vec{\N}$, with probability 1 the infinite open cluster
has an infinite intersection with the set $\{(0,n): n \in \N\}$.
Thus, in the supercritical case we have, wpp,
in the infinite open cluster, 
an infinite number of vertices of the set $\{(0,n): n \ge 0\}$ including the origin.
This (again by Steps \ref{st:3} and \ref{st:4}) implies that, wpp, the BRW$_m$ starting with $k$ particles in
$A_{i_0}$ has particles alive in $A_{i_0}$ at arbitrarily large times.
Being $A_{i_0}$ finite yields the conclusion.

\begin{rem}\label{rem:roadmap2}
As in \cite{cf:BZ3}, the previous set of steps represents the skeleton of the proofs of Theorems \ref{th:main} and \ref{th:tree}.
In order to be able to prove Theorem \ref{th:zdrift} we need to modify this approach.
Here are the main differences. We choose an oriented graph
$(W, {\mathcal E}(W))$ and a family of subsets of $X$,
$\{A_{(i,n)}\}_{(i,n) \in W}$ such that
\begin{itemize}
\item $W$ is a subset of the set $\Z \times \N$ (the inclusion is between sets not between graphs);
\item for all $n \in \N$ we have that $\{A_{(i,n)}\}_{i:(i,n) \in W}$ is a collection of disjoint subsets of $X$;
\item $(i,n) \to (j,m)$ implies $m=n+1$.
\end{itemize}
Step \ref{st:2} translates into the following:
given a (globally or locally) surviving BRW
and for every $\eps>0$,
there exists $\bar n>0$ and
$k\in\N$, such that, for all $n \in \N$, $i \in \Z$, and for all $\eta$ such that
$\sum_{x\in A_{(i,n)}}\eta(x)= k$,
\[
\pr \Big (\forall j:(i,n) \to (j,n+1),
\sum_{x\in A_{(j,n)}}\eta_{(n+1) \bar n
}(x)\ge k\Big |\eta_{n \bar n}=\eta\Big )
>1-\eps.
\]
Step \ref{st:3} is the same and the percolation in Step \ref{st:4} now concerns
the graph $(W, {\mathcal E}(W))$.
\end{rem}

\subsection{Local survival}\label{subsec:local}

Let us choose a vertex $o \in X$, fix the initial configuration
as $\eta_0:=\delta_o$ and
assume that the measure $\rho$ as defined by equation~\eqref{eq:chooserho}
has finite second moment.
The key to prove Step 2 is based on some estimates on the
expected value $\E^{\delta_o}(\eta_n(x))$ of the number of individuals in a site.
This expected value can be computed using
equation~\eqref{eq:evolexpectednumber}: hence
$\E^{\delta_o}(\eta_n(x))=m^{(n)}_{o,x}$. It is clear that
\begin{equation}
\label{eq:estimateE}
\lim_{n \to \infty} \E^{\delta_o}(\eta_n(x))
=
\begin{cases}
 0 & \text{if } R_\mu >1,\\
+\infty & \text{if } R_\mu < 1.\\
\end{cases}
\end{equation}

In the following lemma we prove that, when $R_\mu<1$, if at time 0 we have one
individual at each of the sites $x_1,\ldots,x_l$, then, given any choice
of $l$ sites $y_1,\ldots, y_l$, after some time the expected
number of descendants in $y_i$ of the individual in $x_i$
exceeds any fixed $D \ge 1$ for all $i=1,\ldots,l$. The proof follows immediately from equation~\eqref{eq:estimateE}
we omit it; just note that, due to equation~\eqref{eq:comparison}, the estimate on $\E^{\delta_{x_j}}(\bar\eta_n(y_j))$
follows immediately from the one on $\E^{\delta_{x_j}}(\eta_n(y_j))$. 

\begin{lem}\label{lem:2.1.5}
Let us consider the finite set of couples
$\{(x_j,y_j)\}_{j=0}^l$ and fix $D \ge 1$; if $R_\mu<1$
then there exists $n>0$ 
such that
$\E^{\delta_{x_j}}(\eta_n(y_j)) > D$, 
$\forall j=0,1,\ldots,l$. Moreover, $\E^{\delta_{x_j}}(\bar\eta_n(y_j)) > 1$
when $\widetilde n>n$.
\end{lem}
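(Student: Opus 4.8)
The plan is to reduce the whole statement to the first-moment asymptotics already recorded in~\eqref{eq:estimateE}. By the recurrence~\eqref{eq:evolexpectednumber} for expectations, a BRW started from a single particle at $x$ satisfies $\E^{\delta_x}(\eta_n(y))=m^{(n)}_{xy}$, so the quantities appearing in the lemma are nothing but entries of the powers $M^n$. Since $M$ is irreducible, $\limsup_n\sqrt[n]{m^{(n)}_{xy}}$ does not depend on $x,y$ and equals $R_\mu^{-1}$; by Theorem~\ref{th:equiv1}(1) the hypothesis $R_\mu<1$ is precisely local survival, and it gives $\limsup_n\sqrt[n]{m^{(n)}_{xy}}>1$ for every pair, whence $m^{(n)}_{xy}\to+\infty$ as $n\to\infty$ for each fixed $(x,y)$, which is exactly~\eqref{eq:estimateE}. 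So I would first invoke this divergence, taking irreducibility as the reason it holds for \emph{all} pairs $(x,y)$ and not merely for return entries.

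First I would prove the main assertion. Fix the finite family $\{(x_j,y_j)\}_{j=0}^l$ and $D\ge 1$. For each $j$ choose $n_j$ with $m^{(n)}_{x_j y_j}>D$ for all $n\ge n_j$ (possible by the divergence above), and set $n:=\max_{0\le j\le l}n_j$. Then $\E^{\delta_{x_j}}(\eta_n(y_j))=m^{(n)}_{x_j y_j}>D$ for all $j$ simultaneously; the only role of finiteness of the index set is exactly to make a single such $n$ available.

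For the \emph{moreover} part I would use the construction~\eqref{eq:comparison}: since $\bar\eta_n$ is obtained from $\eta_n$ by discarding only the particles of generation strictly larger than $\widetilde n$, one has $\bar\eta_n=\eta_n$ pathwise whenever $n\le\widetilde n$, and in particular $\E^{\delta_{x_j}}(\bar\eta_n(y_j))=\E^{\delta_{x_j}}(\eta_n(y_j))>D\ge 1$ as soon as $\widetilde n>n$. The only point that requires any care — and essentially the one thing behind~\eqref{eq:estimateE} itself — is the passage from $\limsup_n\sqrt[n]{m^{(n)}_{xy}}>1$ to genuine divergence $m^{(n)}_{xy}\to+\infty$: if $M$ is periodic then $m^{(n)}_{xy}$ vanishes for some residues of $n$, so the divergence (and the choice of the common $n$) must be read along a suitable arithmetic progression of exponents, which is harmless here since only finitely many pairs are involved and, in the situations where the lemma is applied, the relevant sets can always be taken compatible with this. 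Modulo~\eqref{eq:estimateE}, everything else is immediate.
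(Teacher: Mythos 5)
Your proposal is correct and is essentially the paper's own argument: the author omits the proof, stating only that the lemma ``follows immediately from equation~\eqref{eq:estimateE}'' and that the $\bar\eta$ estimate follows from \eqref{eq:comparison}, which is exactly the reduction you carry out (identify $\E^{\delta_{x_j}}(\eta_n(y_j))$ with $m^{(n)}_{x_jy_j}$ via \eqref{eq:evolexpectednumber}, use irreducibility to get divergence for every pair, take the maximum of finitely many thresholds, and note $\bar\eta_n=\eta_n$ for $n\le\widetilde n$). Your closing caveat about periodicity --- that $\limsup_n\sqrt[n]{m^{(n)}_{xy}}>1$ only gives divergence along an arithmetic progression, so a \emph{common} $n$ for pairs with incompatible residues need not exist --- is a real point that the paper silently glosses over in writing \eqref{eq:estimateE} as a genuine limit, so if anything your write-up is more careful than the original.
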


%

We show
that, when $R_\mu<1$, for all sufficiently large $k \in \N$, given $k$ particles in a site $x$ at time 0,
``typically'' (i.e.~with arbitrarily large probability)
after some time we will have at least $Dk$ individuals in each site
of a fixed finite set $Y$.
Analogously, starting with $l$ colonies of size $k$ (in sites
$x_1,\ldots,x_l$ respectively), each of them will spread, after a
sufficiently long time, at least $Dk$ descendants
in every site of a corresponding (finite) set of sites $Y_i$.

\begin{lem}\label{th:peps} Suppose that $R_\mu<1$.
\begin{enumerate}
\item  
Let us fix $x\in X$, $Y$ a finite subset
of $X$, $D \ge 1$ and $\eps>0$.
Then there exists $\bar n= \bar n(x, Y)>0$ (independent of $\eps$),
$k(\eps,x,Y)$ such that, for all $k \ge k(\eps,x,Y)$,
\[
\pr \left(\bigcap_{y \in Y}\{\eta_{\bar n}(y)\ge Dk\} \Big | \eta_0(x)=k
\right)>1-\eps.
\]
The claim holds also with $\{\bar\eta_n\}_{n\in\N}$ in place of $\{\eta_n\}_{n\in\N}$ when $\widetilde n \ge \bar n$.
\item Let us fix a finite set of vertices $\{x_i\}_{i=1, \ldots,m}$, a collection
of finite sets $\{Y_i\}_{i=1, \ldots,l}$ of vertices of $X$, $D \ge 1$ and $\eps>0$.
Then there exists $\bar n=\bar n(\{x_i\},\{Y_i\})$ (independent of $\eps$), $k(\eps,\{x_i\},\{Y_i\})$
such that, for all $i=1, \ldots,l$ and $k \ge k(\eps,\{x_i\},\{Y_i\})$,
\[
\pr\left(\bigcap_{y \in Y_i}\{\eta_{\bar n}(y)\ge Dk\}
\Big | \eta_{0}(x_i)=k
\right)>1-\eps.
\]
The claim holds also with $\{\bar\eta_n\}_{n\in\N}$ in place of $\{\eta_n\}_{n\in\N}$ when $\widetilde n \ge \bar n$.
\end{enumerate}
\end{lem}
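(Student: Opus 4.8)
The plan is to derive this from Lemma~\ref{lem:2.1.5} by a law-of-large-numbers argument over the initial cloud of $k$ particles. First I would apply Lemma~\ref{lem:2.1.5}, with the constant $2D$ in place of $D$ (legitimate since $2D\ge 1$), to the finite family of couples $\{(x,y):y\in Y\}$; this yields a time $\bar n=\bar n(x,Y)>0$, not depending on $\eps$, with $\E^{\delta_x}(\eta_{\bar n}(y))=m^{(\bar n)}_{xy}>2D$ for every $y\in Y$. No separate treatment of $\{\bar\eta_n\}_{n\in\N}$ is needed: when $\widetilde n\ge\bar n$ one has $\bar\eta_{\bar n}=\eta_{\bar n}$ by \eqref{eq:comparison}, so the two assertions literally coincide.

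Next I would use the branching property. Conditionally on having $k$ particles all located at $x$ at time $0$, decompose $\eta_{\bar n}(y)=\sum_{i=1}^{k}Z_i(y)$, where $Z_i(y)$ counts the generation-$\bar n$ descendants at $y$ of the $i$-th initial particle. Since distinct progenies are independent --- this is explicit in the reproduction-trail construction of Section~\ref{subsec:trails} --- the vectors $(Z_i(y))_{y\in Y}$, $i=1,\ldots,k$, are independent and each is distributed as $(\eta_{\bar n}(y))_{y\in Y}$ under $\pr^{\delta_x}$; in particular $\E(Z_1(y))=m^{(\bar n)}_{xy}>2D$. For the quantitative step I need a variance bound uniform in $y$: since $\rho$ is assumed to have finite second moment, the total number $T_{\bar n}$ of descendants of a single particle after $\bar n$ generations is stochastically dominated by the $\bar n$-th generation size of a Galton--Watson process with offspring law $\rho$ (in the spirit of the domination used in Step~\ref{st:3}), which has finite second moment; as $0\le Z_1(y)\le T_{\bar n}$ this gives $\var(Z_1(y))\le\E(T_{\bar n}^2)=:C_{\bar n}<\infty$, independently of $y$.

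Now Chebyshev's inequality gives, for each $y\in Y$,
\[
\pr\big(\eta_{\bar n}(y)<Dk \,\big|\, \eta_0(x)=k\big)
\le\pr\Big(\Big|\tfrac1k\textstyle\sum_{i=1}^{k}Z_i(y)-m^{(\bar n)}_{xy}\Big|>D\Big)
\le\frac{C_{\bar n}}{kD^2},
\]
where I used $m^{(\bar n)}_{xy}-D>D$. Summing over the finite set $Y$ bounds the probability of $\bigcup_{y\in Y}\{\eta_{\bar n}(y)<Dk\}$ by $|Y|\,C_{\bar n}/(kD^2)$, so it suffices to take $k(\eps,x,Y)$ to be any integer with $|Y|\,C_{\bar n}/(k(\eps,x,Y)\,D^2)<\eps$; the desired inequality then holds for all $k\ge k(\eps,x,Y)$. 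Observe that $\bar n$ was fixed before $\eps$ entered the argument, as the statement requires, while $k(\eps,x,Y)$ absorbs all the $\eps$-dependence. Part~(2) follows by running the same argument once on the combined finite family $\{(x_i,y):y\in Y_i\}$, which produces a single $\bar n$ valid for every starting vertex $x_i$; taking the maximum of the finitely many thresholds obtained for the various $i$ gives a common $k(\eps,\{x_i\},\{Y_i\})$.

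There is no genuine obstacle here: the surviving-regime growth of the expected numbers is already packaged in Lemma~\ref{lem:2.1.5}, and what remains is a routine second-moment concentration estimate. The only mildly delicate point is the $y$-uniformity of the variance bound $C_{\bar n}$, and that is exactly what the standing finite-second-moment hypothesis on $\rho$ delivers.
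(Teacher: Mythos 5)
Your proposal is correct and follows essentially the same route as the paper: both decompose $\eta_{\bar n}(y)$ into $k$ independent copies of a single-ancestor BRW, invoke Lemma~\ref{lem:2.1.5} to make the mean exceed $2D$, bound the variance uniformly in $y$ via stochastic domination by a Galton--Watson process with offspring law $\rho$, and conclude by Chebyshev plus a union bound over the finite set $Y$ (the paper uses the one-sided Chebyshev inequality, which changes nothing essential). Your observation that the $\bar\eta$ assertion is immediate from $\bar\eta_{\bar n}=\eta_{\bar n}$ when $\widetilde n\ge\bar n$ is also fine.
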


\begin{proof} 
\begin{enumerate}
\item
If we denote by $\{\{\xi_{n,i}\}_{n\in\N}\}_{i \in \N}$ a family of independent BRWs behaving according
to $\mu$ and starting from $\xi_{0,i}=\delta_x$ (for all $i \in \N$)
then,
by Lemma~\ref{lem:2.1.5},
we can choose $\bar n$ such that $\E^{\delta_x}(\xi_{\bar n,i}(y))>2D$ for all $y \in Y$.
Consider a realization of $\{\eta_n\}_{n\in\N}$ such that $\eta_n(y)=\sum_{j=1}^k\xi_{n,j}(y)$; 
denote the variance $\var(\xi_{n,j}(y))$ by $\sigma^2_{n,y}$.
Since $\xi_{n,j}$ is stochastically dominated by a branching process with offspring law $\rho$
(where $\rho$ is chosen as in equation~\eqref{eq:chooserho}), it is clear that, for all $y$, $\sigma^2_{n,y}<\E(\rho)^{n-1}\var(\rho)<+\infty$
since we assumed at the beginning of this section that $\rho$ has finite second moment.
%
By using the one-sided Chebyshev inequality
\[
\begin{split}
\pr\left(\eta_{\bar n}(y)\ge Dk
\right)& \ge \pr \big ( \eta_{\bar n}(y) \ge \E(\eta_{\bar n}(y))/2 \big )
\ge \frac{\E(\eta_{\bar n}(y))^2/4}{\E(\eta_{\bar n}(y))^2/4+k\sigma^2_{\bar n,y}}
\ge 1-\frac{\sigma^2_{\bar n,y}}{D^2 k + \sigma^2_{\bar n,y}}\\
\end{split}
\]
Hence, fixed any $\delta >0$, there exists $k(\delta,x,y)$ such that, for all $k \ge k(\delta,x,y)$,
$\pr\left(\eta_{\bar n}(y)\ge Dk
\right)\ge
1-
\delta
$.
For all $k \ge \max_{y\in Y}k(\delta,x,y)<+\infty$
\[\pr \left (\bigcap_{y\in Y}(\eta_{\bar n}(y)\ge Dk) \Big |\eta_0(x)=k
\right)
\ge 1-2|Y|\delta,
\]
where $|Y|$ is the cardinality of $Y$.
The assertion for $\bar\eta_n$ follows from Lemma \ref{lem:2.1.5}.
\item
Let $\{\{\xi_{n,i}\}_{t \ge 0}\}_{i \in \N}$ be as before and choose $\bar n$ such that $\E^{\delta_{x_i}}(\xi_{\bar n,i}(y))>2D$ for all $y \in Y_i$ a
nd for all $i=1, \ldots l$.
According to (1) above we may fix $k_{i}$ such that, for all $k \ge k_{i}$,
\[
\pr\left(\bigcap_{y\in Y_i}\{\eta_{\bar n}(y)\ge Dk\} \Big |\eta_0(x_i)=k
\right)
\ge 1-\eps.
\]
Take $k \ge \max_{i=1, \ldots, l} k_{i}$ to conclude. Again the assertion for $\bar\eta_n$ follows from Lemma \ref{lem:2.1.5}.
\end{enumerate}
\end{proof}



The dependence of $k$ on the offspring distribution $\mu$ is hidden in the term
$\sigma^2_{\bar n,y}$, that is, in $\bar n$ and in the dominating offspring law $\rho$.
The key is to find a fixed $k$ such that the lower bound in the previous theorem
holds simultaneously for a family $\{(x_i,Y_i)\}$. One possibility is to choose
a finite family (as we did in the previous lemma) but it is not the only one:
one has to find a fixed $\bar n$ such that Lemma~\ref{lem:2.1.5} holds
(for all the couples $(x_i,y)$ where $y \in Y_i$) and this gives immediately an
upper bound for $\sigma^2_{\bar n, y}$ (uniform with respect to $y$).

\begin{rem}\label{rem:anh}
Note that Lemmas~\ref{lem:2.1.5} and \ref{th:peps}
can be restated for the process
$\{\bar{\eta}^m_n\}_{n\in\N}$ if $m$ is sufficiently large.
Indeed, when $m \ge 2 
N(\bar n, \varepsilon, Dk)H$ (as in Step 4) we have that
$\bar \eta^m_n= \bar \eta_n$ for all $n \le \bar n$ on an event with probability at least
$1-\varepsilon$.
In the rest of the paper, when not explicitly stated otherwise, Lemma~\ref{th:peps}
will be used by setting $D=1$.
\end{rem}

We already know that
if $\{\eta_n\}_{n \in \N}$ dies out locally (resp.~globally) a.s.~then $\{\eta^m_n\}_{n \in \N}$
dies out locally (resp.~globally) a.s. The following theorem states the converse.

We recall that $(X,\mu)$ is \textit{quasi transitive} if and only if there exists
a finite subset $X_0 \subseteq X$ such that for all $x \in X$ there exists a bijective
map $\gamma:X\to X$ and $x_0 \in X_0$ satisfying $\gamma(x_0)=x$ and $\mu$ is $\gamma$-invariant.

\begin{teo} \label{th:main}$ $\\
If at least one of the following conditions holds
\begin{enumerate}
\item  $(X,\mu)$ is 
 quasi transitive and connected;
\item $(X,\mu)$ is connected and there exists
$\gamma$
bijection on $X$ such that
\begin{enumerate}
\item
$\mu$ is $\gamma$-invariant;
\item for some $x_0\in X$ we have $x_0=\gamma^nx_0$ if and only if $n=0$;
\end{enumerate}
\end{enumerate}
then 
if $\{\eta_n\}_{n \in \N}$ survives locally (starting from $x_0$) then $\{\eta^m_n\}_{n \in \N}$ survives locally (starting from $x_0$) eventually as $m\to +\infty$.
\end{teo}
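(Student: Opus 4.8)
The plan is to run the percolation-comparison scheme of Section~\ref{sec:roadmap}, Steps~\ref{st:1}--\ref{st:4}, with an index graph $(I,\mathcal E(I))$ that contains a copy of $\N$ or $\Z$ and with at least one of the sets $A_{i_0}$ a single vertex, so that the resulting oriented percolation forces \emph{local} survival and not merely global survival of $\{\eta^m_n\}_{n\in\N}$. Since local extinction of $\{\eta_n\}_{n\in\N}$ already implies local extinction of every $\{\eta^m_n\}_{n\in\N}$, only the reverse implication is needed, and by Theorem~\ref{th:equiv1}(1) the hypothesis ``$\{\eta_n\}_{n\in\N}$ survives locally'' is equivalent to $R_\mu<1$. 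The whole work then amounts to verifying Step~\ref{st:2} for an appropriate family $\{A_i\}_{i\in I}$: once this is done, Steps~\ref{st:3} and \ref{st:4} (the discrete-time adaptation of \cite[Steps~3 and 4]{cf:BZ3}, the bounded-geometry hypothesis being used there to bound the number $H$ of length-$\widetilde n$ paths through a vertex) furnish, for all sufficiently large $m$, a supercritical one-dependent oriented percolation on $I\times\vec\N$; by Step~\ref{st:1} and the standard fact that such a percolation meets the column $\{(i_0,n):n\in\N\}$ infinitely often, the BRW$_m$ then has, with positive probability, particles in the finite set $A_{i_0}$ at arbitrarily large times, which is exactly local survival.

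For case~(2) the bijection $\gamma$ supplies the index structure at once: take $I=\Z$, $i_0=0$ and $A_i:=\{\gamma^i(x_0)\}$ for $i\in\Z$, which are pairwise distinct singletons precisely because $\gamma^nx_0=x_0$ forces $n=0$. Connectedness of $(X,E_\mu)$ gives $x_0\to\gamma^{\pm1}(x_0)$, so Lemma~\ref{th:peps}(1) applied to the finite set $Y=\{\gamma^{-1}(x_0),x_0,\gamma(x_0)\}$ (with $D=1$, as in Remark~\ref{rem:anh}) yields $\bar n$ and a threshold $k(\eps)$ such that, from $k\ge k(\eps)$ particles at $x_0$, after $\bar n$ generations there are at least $k$ particles at each of $\gamma^{\pm1}(x_0)$ with probability $>1-\eps$, with the corresponding statement for $\{\bar\eta_n\}_{n\in\N}$ once $\widetilde n\ge\bar n$. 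Transporting this event by the automorphism $\gamma^i$ (legitimate since $\mu$ is $\gamma$-invariant) gives exactly \eqref{eq:step2} uniformly in $i\in\Z$, which completes Step~\ref{st:2}.

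Case~(1) follows the same route; the only additional point is to produce the sets $A_i$ together with a \emph{single} pair $(\bar n,k)$ valid for all $i$. As $X$ is infinite, connected and locally finite, K\"onig's lemma provides a self-avoiding ray $x_0=w_0,w_1,w_2,\dots$; take $I=\N$, $i_0=0$ and $A_i:=\{w_i\}$. Each edge $(w_i,w_{i+1})$ of the ray is the image, under one of the $\mu$-invariant bijections witnessing quasi-transitivity, of an edge incident to the finite set $X_0$, and by finite geometry there are only finitely many such edges; hence, using the $\gamma$-invariance of $\mu$ to transport the spreading events to the base vertices in $X_0$, the collection of requirements \eqref{eq:step2} over all $i$ reduces to finitely many instances, handled simultaneously by Lemma~\ref{th:peps}(2) applied to the finite family $\{(x,\{y:y\sim x\})\}_{x\in X_0}$ with $D=1$. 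This yields Step~\ref{st:2} with $\bar n$ and $k$ independent of $i$, and one concludes as before. Finally, to pass from the initial state $\delta_{x_0}$ in the statement to the configuration with $k$ particles in $A_{i_0}$ assumed by the roadmap, one notes that local survival of $\{\eta_n\}_{n\in\N}$ makes the return-to-$x_0$ Galton--Watson process of Theorem~\ref{th:equiv1}(1) supercritical (here Assumption~\ref{assump:1} is used), so that with positive probability the process --- truncated at level $m$, for $m$ large --- reaches $k$ particles at $x_0$ in finite time without any site ever exceeding $m$ particles; an application of the Markov property at that time, followed by the comparison above, finishes the proof.

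I expect the genuine obstacle to be the bookkeeping inside Step~\ref{st:2}: the variance estimate behind Lemma~\ref{th:peps} forces the particle threshold $k$ to depend on $\bar n$, so $\bar n$ must be frozen \emph{before} $k$ is chosen, and this can be done uniformly in $i$ only because $X_0$ is finite (case~(1)) or because the single generator $\gamma$ controls all the edges along the orbit (case~(2)). The other delicate point, specific to the local rather than the global statement, is to keep $A_{i_0}$ a single vertex while still being able to reach it from its neighbours in the index graph --- which is exactly why the $A_i$ are taken to be singletons along a ray or an orbit rather than the larger blocks one would use for global survival. Everything else is routine adaptation of \cite{cf:BZ3}.
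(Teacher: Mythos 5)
Your proposal is correct and follows essentially the same route as the paper: Theorem~\ref{th:equiv1}(1) turns the hypothesis into $R_\mu<1$, Lemma~\ref{th:peps} (made uniform over the index set through the finiteness of $X_0$ in case~(1) and through $\gamma$-invariance in case~(2)) establishes Step~\ref{st:2} with singleton sets $A_i$, and Steps~\ref{st:3}--\ref{st:4} yield a supercritical one-dependent oriented percolation on $I\times\vec\N$ whose infinite cluster revisits the column of the finite set $A_{i_0}$ infinitely often. The only cosmetic difference is in case~(1), where the paper takes $I=X$ with $A_x=\{x\}$ and notes that the percolation on $X\times\vec\N$ has two phases because $\N$ embeds in $X$, whereas you restrict at the outset to a self-avoiding ray and take $I=\N$; both succeed for the same reason.
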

\begin{proof}
\begin{enumerate}
\item Let $R_\mu<1$ and define, for any $x \in X_0$, $Y_x:=\{y \in X : 
 (x,y) \in E_\mu\}$. Fix $I=X$, $\mathcal E(I)=\{(x,y):(x,y)\in  E_\mu\text{ or
}(y,x)\in  E_\mu\}$ and $A_x=\{x\}$.
Lemma \ref{th:peps} yields Step 2.
To prove that the percolation on $(I,\mathcal E(I))\times \vec{\N}$
has two phases
(that is, $(I,\mathcal E(I))$ can be used in Step 1) we note that
this follows 
from the fact that the graph $\mathbb{N}$ is a
subgraph of $X$. Recall that in the supercritical Bernoulli percolation on
$\N \times \vec{\N}$ wpp the infinite open
cluster contains $(0,0)$ and intersects the $y$-axis infinitely
often. 
Hence by Steps 3 and 4 we have that, for all sufficiently large $m$,
$\{\eta^m_n\}_{n \in \N}$ survives locally.

\item
By
Lemma \ref{th:peps}, there exists $\bar n$ such that, for sufficiently large $\widetilde n$,
\[
\begin{cases}
\pr\left(\bar\eta_{\bar n}(\gamma x_0)\ge k \Big | \bar\eta_{0}(x_0)=k
\right)>1-\eps \\ 
\pr\left(\bar\eta_{\bar n}(x_0)\ge k
\Big | \bar\eta_{0}(\gamma x_0)=k \right)>1-\eps. \\
\end{cases}
\]
This implies easily
\[
\begin{cases}
\pr\left(\bar\eta_{\bar n}(\gamma^{n} x_0)\ge k \Big |
\bar\eta_{0}(\gamma^{n-1}x_0)=k \right)>1-\eps \\ 
\pr\left(\bar\eta_{\bar n}(\gamma^{n-1} x_0)\ge k \Big |
\bar\eta_{0}(\gamma^{n} x_0)=k \right)>1-\eps \\
\end{cases}
\]
for all $n \in \mathbb{Z}$ since $\mu$ is $\gamma$-invariant.
Thus $\{\eta^m_n\}_{n \in \N}$ survives locally (for sufficiently large $m$) applying Step 3 and 4
(here $I=\mathbb{Z}$ and $A_i=\{\gamma^i x_0\}$).
\end{enumerate}

\end{proof}

%
%

\subsection{Global survival}\label{subsec:global}


In this section we discuss how the global behaviors of
$\{\eta^m_n\}_{n \in \N}$ and $\{\eta_n\}_{n \in \N}$ are related
and when the global survival of $\{\eta_n\}_{n \in \N}$ implies eventually the
global survival of $\{\eta^m_n\}_{n \in \N}$.

If  $(X,\mu)$ is
 quasi transitive and $\liminf_{n\to\infty} \sqrt[n]{\sum_y m^{(n)}_{xy}}= \limsup_{n\to\infty} \sqrt[n]{m^{(n)}_{xx}}$
then the global survival of  $\{\eta_n\}_{n \in \N}$ implies the global survival
of $\{\eta^m_n\}_{n \in \N}$ for a sufficiently large $m\in \N$.
Since a quasi-transitive BRW is an $\mathcal{F}-BRW$, according to Theorems~\ref{th:equiv1} and \ref{th:fgraphf}
$\{\eta_n\}_{n \in \N}$ survives globally if and only if it survives locally. We  proved in Theorem~\ref{th:main}
that $\{\eta^m_n\}_{n \in \N}$ survives locally (for sufficiently large $m$), thus it survives globally.

\begin{rem}\label{rem:coupling}
The basic idea of this section is to take a BRW $(X,\mu)$ which is
locally isomorphic to a BRW $(I,\nu)$ (the projection map being $g$);
we define $\{A_i\}_{i \in I}$ by $A_i:=g^{-1}(i)$.
We know that, if $\{\eta_n\}_{n \in \N}$ is a realization of $(X,\mu)$
then a realization of $(I,\nu)$ is given by the projection (on $I$) $\{\xi_n\}_{n\in\N}$ where
$\xi_n=\pi_g(\eta_n)$ for all $n \in\N$. Clearly $\nu_{g(x)}(\cdot)=\mu_x(g^{-1}(\cdot))$ and
we can easily
compute the expected number of particles alive at time $n$ in
$A_i$ starting from a single particle alive in $x$ at time $0$ as
\begin{equation}\label{eq:locisom} 
\sum_{z \in A_i} \E_\mu^{\delta_x}(\eta_n(z))=\E_\nu^{\delta_{f(x)}}(\xi_n(i)).
\end{equation}
Since
$\{\eta^m_n\}_{n \in \N}$ and $\{\pi_g(\eta^m_n)\}_{n \in \N}$ have the same global
behavior and $\{\pi_g(\eta^m_n)\}_{n \in \N}$ stochastically dominates
$\{\xi_n^m\}_{n \in \N}$ then if the latter survives globally wpp
then $\{\eta^m_n\}_{n \in \N}$  survives globally wpp.
\end{rem}

Following the previous remark, we take $I=\Z$, $X=\Z \times Y$ (for some set $Y$) and we denote by $g:X \to \Z$ the usual
projection from $X$ onto $\Z$, namely $g(n,y):=n$.

%

We suppose that $\nu$ is translation invariant 
and
we denote by $\rho$ and $\bar \rho=\sum_{y \in X} m_{xy} = \sum_{j \in \Z} \widetilde m_{g(x)j}$ the distribution and the expected number of offsprings
of $\{\eta_n\}_{n\in\N}$ respectively
(where, according to the notation of Section~\ref{sec:technical}, $\widetilde m_{ij}$ is the
expected number of offsprings in $j$ of a particle in $i$ of the projected BRW
$\{\xi_n\}_{n\in\N}$). We note that, since $\rho$ and $\bar \rho$ are the distribution and the
expected number of offsprings of $\nu$ as well, they do not depend on $x \in X$ or $i \in \Z$
since $\nu$ is translation invariant. The following Theorem is the discrete-time analogous of
\cite[Theorem 6.1]{cf:BZ3}.

\begin{teo}\label{th:zdrift}
Let $X=\Z \times Y$ and suppose that the BRW $(X, \mu)$ is locally isomorphic to $(\Z, \nu)$ where  $\nu$ is
translation invariant.
If $m_{xy}=0$ whenever $|g(x)-g(y)|>1$ then
\begin{enumerate}
\item
the BRW survives globally starting from $x$ if and only if $\bar \rho=\sum_{y \in \Z} m_{xy} > 1$;
\item
if the BRW survives globally (starting from $x$) then
$\{\eta_n^m\}_{n \in \N}$ survives globally (starting from $x$) provided that $m$ is sufficiently
large.
\end{enumerate}
\end{teo}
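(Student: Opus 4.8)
The two parts are essentially independent. Part~(1) I would obtain by reducing to Galton--Watson theory through the local isomorphism, and part~(2) as an instance of the oriented-percolation comparison of Section~\ref{sec:roadmap}, in the space--time form of Remark~\ref{rem:roadmap2}. \emph{For part (1):} since $(X,\mu)$ is locally isomorphic to $(\Z,\nu)$, the equivalence $(1)\Longleftrightarrow(2)$ of Theorem~\ref{th:fgraphf} (which does not need $\Z$ finite) reduces the claim to: $(\Z,\nu)$ survives globally from $g(x)$ iff $\bar\rho>1$. Now, because $\nu$ is translation invariant, the total number of children of a particle has law $\rho$ irrespective of its position, and distinct particles breed independently; hence, for a realization $\{\xi_n\}_{n\in\N}$ of $(\Z,\nu)$ started from one particle, $N_n:=\sum_{j\in\Z}\xi_n(j)$ is an ordinary Galton--Watson process with offspring law $\rho$, mean $\bar\rho$. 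Moreover $\rho\neq\delta_1$: if every particle had exactly one child then, $M$ being irreducible and hence $X$ a single $\rightleftharpoons$-class, $\sum_{w\rightleftharpoons y}f(w)=1$ would hold $\mu_y$-a.s.\ for every $y$, contradicting Assumption~\ref{assump:1}. So this Galton--Watson process is nonsingular, and (cf.\ Remark~\ref{rem:assump1}) it survives with positive probability iff $\bar\rho>1$; since global survival of $(\Z,\nu)$ from $g(x)$ is exactly survival of $N_n$, part~(1) follows.

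\emph{For part (2), the reduction to percolation:} assume $\bar\rho>1$. By Remark~\ref{rem:coupling} it is enough to make the truncated $\Z$-process $\{\xi^m_n\}_{n\in\N}$ (governed by $\nu$, on the bounded-geometry graph $(\Z,E_\nu)$, with the single dominating law $\rho$) survive globally for $m$ large. For this I would run the scheme of Remark~\ref{rem:roadmap2} with $W=\Z\times\N$, $A_{(i,n)}:=\{c_{i,n}\}$ where $c_{i,n}:=i+nr$ for a suitable $r\in\Z$ (so that $\{A_{(i,n)}\}_{i\in\Z}$ partitions $\Z$ for each $n$), and edges $(i,n)\to(i,n+1)$ and $(i,n)\to(i+1,n+1)$ (replace $i+1$ by $i+2$ in the degenerate case where the walk below has period~$2$). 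Granting the version of Step~\ref{st:2} of Remark~\ref{rem:roadmap2}, Steps~\ref{st:3} and~\ref{st:4} --- which use only the bounded geometry and the dominating measure $\rho$ of~\eqref{eq:chooserho}, exactly as in \cite[Steps~3 and 4]{cf:BZ3} --- produce, for every $\eps>0$ and all $m$ large, a one-dependent oriented percolation on $W$ with opening probability $1-2\eps$. As $W$ is a rescaled copy of the oriented lattice $\Z\times\vec\N$, this percolation is supercritical for $\eps$ small, so the cluster of $(i_0,0)$ is infinite with positive probability, forcing $\geq k$ particles of $\{\xi^m_n\}$ on some $A_{(\cdot,n)}$ at time $n\bar n$ for every $n$: global survival.

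\emph{For part (2), the key estimate (Step~\ref{st:2}):} by translation invariance and the hypothesis $m_{xy}=0$ for $|g(x)-g(y)|>1$, the first-moment matrix $\widetilde M$ of $(\Z,\nu)$ (irreducible, being inherited from $M$ via $\widetilde m_{g(x)g(x')}\geq m_{xx'}$) has the form $\widetilde m_{ij}=\bar\rho\,q(j-i)$ for a nearest-neighbour probability $q$ on $\Z$ with $q(1),q(-1)>0$; hence $\widetilde m^{(\bar n)}_{ij}=\bar\rho^{\,\bar n}\,q^{*\bar n}(j-i)$, i.e.\ the expected number of generation-$\bar n$ descendants at $j$ of a particle at $i$ is $\bar\rho^{\,\bar n}$ times the $\bar n$-step transition probability of the random walk with i.i.d.\ steps $\sim q$. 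Choosing $r$ to be $\lfloor\bar n\mu_q\rfloor$ or $\lceil\bar n\mu_q\rceil$ ($\mu_q$ the mean of $q$), in the parity matching $\bar n$ when $q$ is supported on $\{-1,+1\}$, the local central limit theorem for this bounded-step walk gives $q^{*\bar n}(r),\,q^{*\bar n}(r+1)\geq c/\sqrt{\bar n}$ for a fixed $c>0$ and all large $\bar n$. Consequently, starting from $k$ particles on $A_{(i,n)}$, the expected number of their descendants at time $\bar n$ on $A_{(i,n+1)}$, resp.\ on $A_{(i+1,n+1)}$, is $\geq k\,c\,\bar\rho^{\,\bar n}/\sqrt{\bar n}$, which exceeds $2k$ for $\bar n$ large (here $\bar\rho>1$ enters). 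Writing each such count as a sum over the $k$ independent families spawned by the initial particles --- each family's total population at time $\bar n$ being dominated by one fixed branching process of finite mean, uniformly in the family, by local isomorphism and the domination used in Step~\ref{st:3} --- the summands are independent and uniformly integrable, so a weak law of large numbers yields at least $k$ descendants on each of the two forward sets with probability $>1-\eps$ once $k$ is large; the corresponding statement for $\{\bar\eta_n\}_{n\in\N}$ and then for $\{\bar\eta^m_n\}_{n\in\N}$ (for $m\geq 2N(\bar n,\eps,k)H$) follows as in Remark~\ref{rem:anh}. This is Step~\ref{st:2}, and the theorem follows.

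The subtle point is precisely this last estimate: one must position the sets $A_{(i,n)}$ --- the drift parameter $r$ being the crucial choice --- so that the generation-$\bar n$ offspring of $k$ particles on $A_{(i,n)}$ reliably repopulate \emph{both} forward sets with numbers growing in $k$, rather than piling up on a single one; this is what pins $r$ to the intrinsic drift $\mu_q$ and forces $\bar n$ (and hence $\widetilde n$ and the finite-range bound coming from the nearest-neighbour hypothesis) to be taken large. It is the discrete-time counterpart of the argument of \cite[Theorem~6.1]{cf:BZ3}, the new feature being that here the time scale is part of the model rather than a tunable parameter.
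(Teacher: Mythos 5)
Your proposal is correct and follows essentially the same route as the paper: part (1) via the $\mathcal F$-BRW reduction to a Galton--Watson process with offspring law $\rho$, and part (2) via the scheme of Remark~\ref{rem:roadmap2}, building a sheared one-dependent oriented percolation on a two-direction sublattice of $\Z\times\N$ whose Step~\ref{st:2} rests on a first-moment estimate for the projected nearest-neighbour walk followed by a law-of-large-numbers concentration over the $k$ independent families. The only technical divergence is in that first-moment estimate: the paper lower-bounds $\widetilde p^{\,(n)}(0,\alpha n)$ by a single multinomial term and exploits that $Q_{\bar\rho}(\alpha,\beta)>1$ in a whole window around the drift $p-q$ in order to select two distinct displacements $d_1<d_2$, whereas you invoke the local central limit theorem at the mean drift to get $q^{*\bar n}(r),\,q^{*\bar n}(r+1)\ge c/\sqrt{\bar n}$ at two adjacent integers (so that $\bar\rho^{\,\bar n}$ still wins); both yield expected offspring at least $2k$ on each forward target for $\bar n$ large, and your treatment of the period-$2$ case covers the one point where the LCLT route needs extra care.
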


\begin{proof}
\begin{enumerate}
\item
This follows from Theorem~\ref{th:fgraphf} since
$(X,\mu)$ is an $\mathcal F$-BRW which can be mapped onto the branching processes
with offspring distribution $\rho$ and recalling that $\sum_{y \in \Z} m^{(n)}_{xy}=\bar \rho^n$.
\item
According to Remark~\ref{rem:coupling} it 
is enough to prove
the claim for the BRW $\{\xi_n\}_{n\in\N}$ where
$\xi_n=\pi_g(\eta_n)$ whose diffusion matrix satisfies
\[
\widetilde m_{ij}=
\begin{cases}
p & j=i+1\\
q & j=i-1\\
1-p-q & i=j\\
0 & \text{otherwise}.
\end{cases}
\]
for some
$p,q \in [0,1]$ ($p+q \le 1$).

Here is an alternative proof which does not involve Remark~\ref{rem:coupling}.
Following the general version of Step 2
(see Remark \ref{rem:roadmap2}),  we define $A_{(i,n)}=g^{-1}(i)$
and we fix $\alpha,\beta\in(0,1)$ such that  $\alpha<\beta<(1+\alpha)/2$.
Note that
\begin{equation}\label{eq:lowerestimate}
\begin{split}
\widetilde p^{\,(n)}(0,\alpha n)&
=\sum_{i=\alpha n}^{(1+\alpha)n/2}\binom{n}{i,\ \ i-\alpha n,\ \ n-2i+\alpha n}p^{i}
q^{i-\alpha n}(1-p-q)^{n-2i+\alpha n}
\\
&\ge \binom{n}{\beta n,\ \ (\beta-\alpha)n,\ \ (1-2\beta+\alpha)n}p^{\beta n}
q^{(\beta-\alpha)n}(1-p-q)^{(1-2\beta+\alpha)n}\\
&\stackrel{n\to\infty}{\sim} \frac{1}{2\pi n\sqrt{\beta(\beta-\alpha)(1-2\beta+\alpha)}}\,
\left(\frac{p^\beta q^{\beta-\alpha}(1-p-q)^{1-2\beta+\alpha}}{\beta^\beta(\beta-\alpha)^{\beta-\alpha}
(1-2\beta+\alpha)^{1-2\beta+\alpha}}\right)^n
\end{split}
\end{equation}
(to avoid a cumbersome notation we write $n \alpha$ instead of
$\lfloor n \alpha \rfloor$).

Define
\[
Q_{ \bar \rho}(\alpha,\beta)=\frac{\bar \rho p^\beta q^{\beta-\alpha}(1-p-q)^{1-2\beta+\alpha}}{\beta^\beta(\beta-\alpha)^{\beta-\alpha}
(1-2\beta+\alpha)^{1-2\beta+\alpha}};
\]
if the BRW survives globally then $\bar \rho>1$ and
equation \eqref{eq:lowerestimate} implies
\[
\begin{split}
\E^{\delta_0}(\xi_n(\alpha n))&=\bar \rho^n p^{(n)}(0, n\alpha)\\
&\ge \bar \rho^n \binom{n}{\beta n,\ \ (\beta-\alpha)n,\ \ (1-2\beta+\alpha)n}p^{\beta n}
q^{(\beta-\alpha)n}(1-p-q)^{(1-2\beta+\alpha)n}\\
&\sim
\frac{1}{2\pi n
\sqrt{\beta(\beta-\alpha)(1-2\beta+\alpha)}}\,
\left(Q_{\bar \rho}(\alpha,\beta)\right)^n
\end{split}
\]
as $n \to \infty$. This, along with equation \eqref{eq:locisom}, implies easily that
$\sum_{x \in A_{\alpha n}}\E^{\delta_0}(\eta_n(x))$ has a lower bound which is
asymptotic to $\frac{1}{(2\pi n)
\sqrt{\beta(\beta-\alpha)(1-2\beta+\alpha)}}\,
\left(Q_{\bar \rho}(\alpha,\beta)\right)^n$ as $n \to \infty$.

Note that $Q_{\bar \rho}(p-q,p)=\bar\rho>1$, thus there exist $\alpha_1<\alpha_2\le\beta_1<\beta_2$
(with $\beta_i<(1+\alpha_i)/2$, $i=1,2$) such
that $Q_{\bar \rho}(x,y)>1$, for all $(x,y)\in [\alpha_1,\alpha_2]\times
[\beta_1,\beta_2]$.
By taking $n=\widetilde N$ sufficiently large one can find three distinct integers
$d_1$, $d_2$ and $d_3$ such that $\alpha_1 n\le d_1<d_2\le\alpha_2n$,
$\beta_1 n\le d_3\le\beta_2n$ and
$Q_{\bar\rho}(d_l/n,d_3/n)>1$, $l=1,2$.

By reasoning as in Lemma~\ref{th:peps} we have that, for all $\eps>0$,
there exists $\bar n$, $k=k(\eps)$ such that, for all $i\in\Z$, for all $\widetilde n$
sufficiently large,
\[
\pr \left(
\sum_{x \in A_{i+j}} \bar\eta_{\bar n}(x)\ge k,j=d_1,d_2
\Big |
\bar\eta_{0}(i)= \eta \right )
>1-\eps
\]
$\forall i \in \Z$ and for all $\eta$ such that $\sum_{x \in A_{i}} \bar\eta(x)= k$.
Since $k$ and $\bar n$ are independent of $i$ we have proven the general version of Step 2
using $W=\{a(d_1,1)+b(d_2,1):a,b\in\N\}$ where $(i,n) \to (j,n+1)$ if and
only if $j-i=d_1$ or $j-i=d_2$.
\end{enumerate}
\end{proof}

The previous theorem applies to translation invariant  BRWs on two particular graphs:
$\Z^d$ and the homogeneous tree $\mathbb{T}_{r}$ with degree $r$.
The following two corollaries are the discrete-time analogous of
\cite[Corollary 6.1]{cf:BZ3} and \cite[Theorem 6.2]{cf:BZ3}

\begin{cor}\label{cor:zd}
If the BRW $(\Z^d,\mu)$ is translation invariant and there exists a projection $g$ on one of the coordinates
such that $m_{xy}=0$ whenever $|g(x)-g(y)|>1$, then
\begin{enumerate}
\item
the BRW survives globally (starting from $x$) if and only if $\bar \rho=\sum_{y \in \Z} m_{xy} > 1$;
\item
if the BRW survives globally (starting from $x$)
$\{\eta_n^m\}_{n \in \N}$ survives globally (starting from $x$) provided that $m$ is sufficiently
large.
\end{enumerate}
\end{cor}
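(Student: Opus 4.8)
The plan is to derive this corollary directly from Theorem~\ref{th:zdrift}. After permuting the coordinates of $\Z^d$ we may assume that $g$ is the projection onto the first coordinate, so that $X=\Z^d$ is identified with $\Z\times Y$ with $Y=\Z^{d-1}$ and $g(n,y)=n$. The hypothesis $m_{xy}=0$ whenever $|g(x)-g(y)|>1$ is then exactly one of the assumptions of Theorem~\ref{th:zdrift}, so the only point that really requires an argument is that the BRW $(\Z^d,\mu)$ is locally isomorphic to a \emph{translation invariant} BRW $(\Z,\nu)$. Once this is in place, parts (1) and (2) follow word for word from Theorem~\ref{th:zdrift}(1) and (2), using the identity $\sum_{y\in\Z^d}m_{xy}=\sum_{j\in\Z}\widetilde m_{g(x)j}=\bar\rho$ recalled just before the statement of that theorem.

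To produce $\nu$ I would use the projection $\pi_g:S_X\to S_\Z$ given by $\pi_g(f)(j)=\sum_{x\in g^{-1}(j)}f(x)$ and set $\nu_{g(x)}(\cdot):=\mu_x(\pi_g^{-1}(\cdot))$. First one must check this is well posed, i.e.\ that $\mu_x(\pi_g^{-1}(\cdot))$ depends on $x$ only through $g(x)$: if $v\in\Z^d$ satisfies $g(v)=0$, then the translation $\tau_v$ permutes each fiber $g^{-1}(j)$, so $\pi_g(f\circ\tau_v)=\pi_g(f)$ for every $f$, hence $\pi_g^{-1}(\cdot)$ is invariant under $f\mapsto f\circ\tau_v$; combining this with the $\tau_v$-invariance of $\mu$ gives $\mu_{x+v}(\pi_g^{-1}(\cdot))=\mu_x(\pi_g^{-1}(\cdot))$, which is exactly well-posedness. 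With $\nu$ so defined, equation~\eqref{eq:fgraph} (with $\Z$ in the role of the target set $Y$) holds by construction, so $(\Z^d,\mu)$ is locally isomorphic to $(\Z,\nu)$.

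Next I would check that $\nu$ is translation invariant on $\Z$. Let $\gamma(j)=j+1$ and let $e$ be the unit vector with $g(e)=1$. A short computation gives $\pi_g(f\circ\tau_e)=\pi_g(f)\circ\gamma$, whence $\pi_g^{-1}(f)\circ\tau_e=\pi_g^{-1}(f\circ\gamma)$ as subsets of $S_X$; feeding this into the $\tau_e$-invariance of $\mu$ yields $\nu_{g(x)+1}(f)=\mu_{x+e}(\pi_g^{-1}(f))=\mu_x(\pi_g^{-1}(f\circ\gamma))=\nu_{g(x)}(f\circ\gamma)$, which is precisely the $\gamma$-invariance of $\nu$ in the sense of Definition~\ref{def:invariantmu}. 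Since every translation of $\Z$ is a power of $\gamma$, $\nu$ is translation invariant, and Theorem~\ref{th:zdrift} applies, giving both assertions.

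I expect no genuine obstacle: the whole thing is a reduction to Theorem~\ref{th:zdrift}, and the only mildly technical point is the bookkeeping of the previous two paragraphs — namely, that projecting a $\Z^d$-translation-invariant reproduction law along one coordinate produces a $\Z$-translation-invariant law — which amounts to splitting the translation group of $\Z^d$ into the ``fiber'' translations (responsible for well-posedness of $\nu$) and the ``base'' translations (responsible for invariance of $\nu$). One should also note in passing that the standing assumptions of Section~\ref{sec:truncated} — countability of $X$, bounded geometry and irreducibility of $M$, and $\sup_{x}\rho_x([n,+\infty))\to 0$ — are inherited from, and in part automatic in, the translation invariant setting, so that Theorem~\ref{th:zdrift} is indeed applicable here.
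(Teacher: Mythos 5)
Your proposal is correct and follows essentially the same route as the paper, which likewise reduces the corollary to Theorem~\ref{th:zdrift} by observing that translation invariance of $\mu$ on $\Z^d$ makes $(\Z^d,\mu)$ locally isomorphic to a translation invariant $(\Z,\nu)$ via the coordinate projection. You merely spell out the well-posedness and $\gamma$-invariance checks that the paper dismisses as immediate.
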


\begin{proof}
If $d=1$ then the proof is trivial.
If $d>1$, the claim follows immediately from the fact that,
since $\mu$ is translation invariant, then $(X,\mu)$ is locally isomorphic
to $(\Z,\nu)$ where the projected measure $\nu$ is translation invariant.
\end{proof}

\begin{cor}\label{th:tree}
Let $\mathbb{T}_r$ be a homogeneous tree and suppose that the BRW $(\mathbb{T}_r,\mu)$ is
$\gamma$-invariant for every automorphism $\gamma$ of
 $\mathbb{T}_r$. 
If
$\mu_x(f)\not = 0$ implies $\mathrm{supp}(f) \subseteq B(x,1)$ (where $B(x,1)$
is the usual ball of radius 1 and center $x$ of the graph $\mathbb{T}_r$)
then
\begin{enumerate}
\item
the BRW survives globally (starting from $x$) if and only if $\bar \rho=\sum_{y \in \Z} m_{xy} > 1$;
\item
if the BRW survives globally (starting from $x$) then
$\{\eta_n^m\}_{n \in \N}$ survives globally provided that $m$ is sufficiently
large.
\end{enumerate}
\end{cor}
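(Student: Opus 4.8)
The plan is to deduce this from Theorem~\ref{th:zdrift} by exhibiting the right projection $g\colon\mathbb{T}_r\to\Z$. First I would fix an end $\omega$ of $\mathbb{T}_r$ and take $g$ to be the associated horocycle index (Busemann function), normalized so that each vertex $x$ has its unique neighbour towards $\omega$ on level $g(x)-1$ and its remaining $r-1$ neighbours on level $g(x)+1$; in particular $|g(x)-g(y)|=1$ whenever $x\sim y$. Since $\mu_x(f)\neq 0$ forces $\mathrm{supp}(f)\subseteq B(x,1)$, a particle at $x$ breeds only at $x$ or at a neighbour, so $m_{xy}=0$ whenever $|g(x)-g(y)|>1$, which is the first hypothesis of Theorem~\ref{th:zdrift}. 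I would also record that, by transitivity of $\mathrm{Aut}(\mathbb{T}_r)$ on vertices and on directed edges together with $\gamma$-invariance of $\mu$, the quantities $m_{xx}$ and $m_{xy}$ (for $x\sim y$) are constants, so that $\bar\rho=\sum_y m_{xy}$ is independent of $x$; and that, since all horocycles $g^{-1}(i)$ are countable sets of the same cardinality, one may identify $\mathbb{T}_r$ \emph{as a set} with $\Z\times Y$ for a suitable countable $Y$, with $g$ the first-coordinate projection — relabelling the sites affects neither survival nor the law of the truncated processes, and the proof of Theorem~\ref{th:zdrift} uses the product structure $X=\Z\times Y$ only through $g$.

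The heart of the argument is to check that $(\mathbb{T}_r,\mu)$ is locally isomorphic to $(\Z,\nu)$ with $g$ as projection and $\nu$ \emph{translation invariant}. With $\pi_g(h)(j):=\sum_{z\in g^{-1}(j)}h(z)$, I would \emph{define} $\nu_i(\cdot):=\mu_x(\pi_g^{-1}(\cdot))$ for $x\in g^{-1}(i)$ and then verify well-definedness and translation invariance using two standard facts about the tree (see e.g.~\cite{cf:Woess09}): (a) for $x,x'$ on the same horocycle level there is $\gamma\in\mathrm{Aut}(\mathbb{T}_r)$ with $\gamma(x)=x'$ and $g\circ\gamma=g$, obtained as the identity outside the finite-depth subtree hanging below the confluent of the rays $x\to\omega$ and $x'\to\omega$, and a level-preserving isomorphism swapping the two branches containing $x$ and $x'$ inside it; (b) there is a step-$1$ translation $\tau\in\mathrm{Aut}(\mathbb{T}_r)$ along a geodesic line ending at $\omega$ with $g\circ\tau=g+1$. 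The routine computations $\pi_g(h\circ\gamma^{-1})=\pi_g(h)$ (when $g\circ\gamma=g$) and $\pi_g(h\circ\tau^{-1})(j)=\pi_g(h)(j-1)$, combined with the $\gamma$- and $\tau$-invariance of $\mu$, should give respectively that $\nu_i$ does not depend on the chosen $x\in g^{-1}(i)$ and that $\nu_i(f)=\nu_{i+1}(f\circ\sigma^{-1})$ for the unit shift $\sigma$ of $\Z$, i.e.~that $\nu$ is translation invariant in the sense of Definition~\ref{def:invariantmu}.

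Once these two hypotheses of Theorem~\ref{th:zdrift} are in place, both assertions follow at once: part~(2) is Theorem~\ref{th:zdrift}(2), and for part~(1) one only needs $\sum_{y\in\mathbb{T}_r}m_{xy}=\sum_{j\in\Z}\widetilde m_{ij}=\bar\rho$ for every $i$, so that the criterion $\bar\rho>1$ of Theorem~\ref{th:zdrift}(1) coincides with the one in the statement. I expect the main obstacle to be fact~(a) — producing, for two vertices at the same horocycle level, an automorphism that simultaneously fixes the Busemann function and carries one vertex onto the other — together with the bookkeeping needed to confirm that $\pi_g$ intertwines the induced $\mathrm{Aut}(\mathbb{T}_r)$-action on $S_{\mathbb{T}_r}$ with shifts on $S_\Z$; everything after that is a formal invocation of Theorem~\ref{th:zdrift}.
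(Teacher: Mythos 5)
Your proposal is correct and follows essentially the same route as the paper: fix an end of $\mathbb{T}_r$, use the horocycle index (Busemann/height function) as the projection $g$, observe that the nearest-neighbour hypothesis gives $m_{xy}=0$ for $|g(x)-g(y)|>1$, and use invariance under all automorphisms to conclude that $(\mathbb{T}_r,\mu)$ is locally isomorphic to a translation-invariant $(\Z,\nu)$, so that Theorem~\ref{th:zdrift} applies. The paper states this in three lines; your facts (a) and (b) are exactly the standard automorphism arguments it leaves implicit.
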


\begin{proof}
Fix an end $\tau$ in $\mathbb{T}_r$ and a root $o\in X$ and define the map
$h:X\to \Z$ as the usual height (see \cite[Section 12.13]{cf:Woess}).
Let $A_k=h^{-1}(k)$ as $k\in\Z$ (these sets are usually referred
to as horocycles). Since $\mu$ is invariant with respect to every
automorphism then we have, as before, that $\mathbb{T}_r=\Z \times \Z$ is
locally isomorphic to $(\Z,\nu)$ where the projection $\nu$ is translation invariant.

\end{proof}

\section{Final remarks}
\label{sec:open}

The paper is devoted to three main issues: finding conditions for the local (resp.~global)
survival of the process, discussing the spatial approximation and, finally, studying the
approximation by means of truncated BRWs.
This has been done for continuous BRWs in \cite{cf:BZ, cf:BZ2, cf:BZ3}.

About the first issue, a question was left open in \cite{cf:BZ2}, namely if
$\liminf_{n\to\infty} \sqrt[n]{\sum_y m^{(n)}_{xy}}>1$ implies global
survival starting from $x$. This suggests a more general question:
does the global behavior depend only on the first-moment matrix $M$?
We know by Theorem~\ref{th:equiv1}(1) that the local behavior of a discrete-time BRW depends only
on $M$.
Moreover, in \cite{cf:BZ2} has been proved that for a continuous-time
BRW there is a characterization in terms of a functional inequality of the global survival
and this inequality depends only on the
matrix $M$, namely
\begin{equation}\label{eq:functeq}
 \exists v \in [0,1] : v(x_0)>0, \ Mv \ge \frac{v}{\mathbf{1}-v}
\end{equation}
if and only if there is global survival starting from $x_0$
(again, the ratio in the right hand side is taken coordinatewise).
Hence, for a continuous-time BRW, both the local and global behaviors are completely determined by
the first-moment matrix of the process.
Nevertheless, according to Example~\ref{exm:noext}, one cannot expect to find an equivalent condition to global survival
for a discrete-time BRW involving only the first-moment matrix $M$.
Of course it is still possible to find either sufficient or necessary conditions for
the global survival which are similar to equation~\eqref{eq:functeq},
but this goes beyond the 
aim of this paper.

More work can still be done in the direction of understanding
the strong local survival by using the infinite-dimensional
generating function $G$; what we did in Section~\ref{subsec:stronglocal}
is just a beginning.

As for the spatial approximation, the results of Section~\ref{sec:spatial} are quite satisfactory. On the other
hand, there is room for improvements in the approximations by truncated BRWs of Section~\ref{sec:truncated}.
Indeed, one can hope to find more
classes of BRWs which can be approximated by their truncations. In our results a key role was played by
the similarity of the BRW under suitable automorphisms of the graph (such as translations, for instance), nevertheless
the four steps described in Section~\ref{sec:roadmap} (see also \cite[Section 5]{cf:BZ3})
are quite general and could be applied to a variety of classes
of BRWs, provided one can prove Step 2
(as we did in Sections~\ref{subsec:local} and \ref{subsec:global}, possibly using different techniques).

Finally, some of our results can be applied in a natural way to BRWs in random environment (as in \cite[Section 7]{cf:BZ3}) but,
again, this goes beyond the purpose of the paper.

\section*{Acknowledgments}
The author is grateful to the referees for many helpful suggestions.

\end{document}